\newcommand{\wrt}{with respect to}
\newcommand{\fp}{\mathfrak{p}}
\newcommand{\nsl}{\textrm{for \textit{n} sufficiently large}}
\newcommand{\NN}{\mathbb{N}_0 }
\newcommand{\Nd}{\mathbb{N}_0^d}
\newcommand{\Ns}{\mathbb{N}_0^s}
\newcommand{\ZZ}{\mathbb{Z} }
\newcommand{\R}{\mathcal{R}}
\newcommand{\rt}{\rightarrow}
\newcommand{\xar}{\longrightarrow}
\newcommand{\ov}{\overline}
\newcommand{\J}{J_1, \ldots, J_d }
\newcommand{\Min}{\operatorname{Min}}
\newcommand{\Ass}{\operatorname{Ass}}
\newcommand{\grade}{\operatorname{grade}}
\theoremstyle{plain}
\newtheorem{theorem}{Theorem}[section]
\newtheorem{corollary}[theorem]{Corollary}
\newtheorem{lemma}[theorem]{Lemma}
\newtheorem{proposition}[theorem]{Proposition}
\theoremstyle{definition}
\newtheorem{definition}[theorem]{Definition}
\newtheorem{remark}[theorem]{Remark}
\newtheorem{example}[theorem]{Example}
\theoremstyle{remark}
\begin{document}

\title{Quasi-finite modules and asymptotic prime divisors}

 \author{Daniel Katz}
\address{Department of Mathematics, University of Kansas, Lawrence, KS 66045}
\email{dlk@math.ku.edu}

\author{Tony J. Puthenpurakal}
\address{Department of Mathematics, Indian Institute of Technology Bombay, Powai, Mumbai 400 076, India}
 \email{tputhen@math.iitb.ac.in}
\date{\today}

\keywords{Quasi-finite modules, multigraded modules, asymptotic prime divisors}
\subjclass{Primary 13A17; Secondary 13A30}

\begin{abstract}
Let $A$ be a Noetherian ring, $J\subseteq A$ an ideal and $C$ a finitely generated $A$-module.
In this note we would like to prove the following statement. Let
$\{I_n\}_{n\geq 0}$ be a collection of ideals satisfying : (i) $I_n\supseteq J^n$, for all $n$,  (ii) $J^s\cdot I_s \subseteq I_{r+s}$, for all
$r,s\geq 0$ and (iii) $I_n\subseteq I_m$, whenever $m\leq n$. Then
$\Ass_A(I_nC/J^nC)$ is independent of $n$, for $n$ sufficiently large. Note that the set of prime ideals $\cup_{n\geq 1} \Ass_A(I_nC/J^nC)$
is finite, so the issue at hand is the realization that the primes in
$\Ass_A(I_nC/J^nC)$ \textit{do not} behave periodically, as one might have expected, say
if $\bigoplus _{n\geq 0}I_n$ were a Noetherian $A$-algebra generated in degrees greater than
one. We also give a multigraded version of our results.

\end{abstract}

\maketitle

\section{Introduction}
Let $A$ be a Noetherian ring, $J\subseteq A$ an ideal and $C$ a finitely generated $A$-module.
Then, by a well known theorem of Brodmann (see \cite{B}), $\Ass_A(C/J^nC)$ is a stable set
of prime ideals for $n$ large. Brodmann's result has many applications and has been generalized
in various forms. For example, see \cite{KMR}, \cite{KS}, \cite{EW}, \cite{KW}, \cite{KR} and \cite{H}, among others.

In this note we are motivated by the following question. Given an ideal $J\subseteq A$, a finitely generated $A$-module $C$ and a filtration of ideals $\{I_n\}_{n\geq 0}$ with $J^n\subseteq I_n$ for all $n$, when is the set of associated primes $\Ass_A(I_nC/J^nC)$ a stable set of prime ideals ? It turns out that the desired stabilty holds under very mild conditions on the filtration $\{I_n\}_{n\geq 0}$.  It is important to note that the set of prime ideals $\cup_{n\geq 1} \Ass_A(I_nC/J^nC)$
is well known to be a finite set and the issue at hand is the realization that the primes in
$\Ass_A(I_nC/J^nC)$ \textit{do not} behave periodically, as one might have expected, say
if $\bigoplus _{n\geq 0}I_n$ were a Noetherian $A$-algebra generated in degrees greater than
one (see \cite{EW}).

Not surprisingly, our approach is through graded modules defined over finitely generated $A$-algebras. To elaborate,
let $R = \bigoplus_{n \geq 0}R_n$ be a finitely standard graded $A$-algebra, i.e., $R = A[R_1]$ and
$R_1$ is a finite $A$-module.
Let $M = \bigoplus_{n\in \ZZ}M_n$ be a graded $R$-module with $M_n = 0$ for all $n$ sufficiently small. We will assume throughout
that each $M_n$ is a finitely generated $A$-module. Note that we do \textit{not} assume that $M$
is a finitely generated $R$-module. Throughout, we set $L := H^0_{R_+}(M) = \bigoplus_{n\in \ZZ} L_n$.
We say that $M$ is  \emph{quasi-finite} if, in addition, $L_n = 0$ \ $\nsl$. It turns out that for the theory of 
asymptotic prime divisors, it is the quasi-finite property that is crucial, and not the finite generation of the module $M$. Indeed, almost all of the results we give in the context of graded modules are already known in the finite case (see \cite{M}, \cite{EW} and \cite{H}). Of course, finite modules are quasi-finite. For an example of a quasi-finite module  which is not finite, we will see below that $A[X]/A[JX]$ as a $A[JX]$-module, where $X$ is an indeterminate and
 $J\subseteq A$ is a proper 
ideal with positive grade. The notion of quasi-finite modules  was introduced in \cite{JPV}.

It turns out that our results are not much harder to come by if we consider multigraded rings, i.e., standard  $\Nd$-graded Noetherian $A$-algebras and multigraded modules over them. In section two, we 
define the types of modules we are interested in and, in particular, we extend the definition of quasi-finite module to the multigraded case. We then prove our basic results concerning asymptotic prime divisors of quasi-finite multigraded modules. In particular, we note that we can achieve the standard stability result known for finite modules (see Theorem \ref{main}). In section three we give some specific examples and applications of the results in section two, especially in the singly graded case. Our problems would be simpler to solve but less interesting if the graded (or multigraded) modules under consideration were always quasi-finite. To deal with modules $M$ 
which are not quasi-finite, we look at the quasi-finite module $M/L$. In section four, we consider what happens in the case of 
multigraded modules that are not necessarily quasi-finite. It turns out that we can isolate the precise obstruction to stability of asymptotic prime divisors in the general case (see Theorem \ref{genstability}). 
Finally, in section five we present a multi-ideal version of the result alluded to in the abstract. In particular, we prove the following theorem.

\begin{theorem}\label{introthm} Let $A$ be a noetherian ring, $C$ a finitely generated $R$-module and $\J \subseteq A$ finitely many ideals. Suppose that for 
each $1\leq i\leq d$, $\{I_{i,n_i}\}_{n_i\geq 0}$ is a filtration of ideals satisfying : (i) $I_{i,0} = A$, 
(ii) $J_i^{n_i}\subseteq I_{i,n_i}$, for all $n_i\geq 0$, (iii) For all $m_i\leq n_i\in \NN$, $I_{i,n_i}\subseteq I_{i,m_i}$ and (iv) $J_{r_i}\cdot I_{i,s_i} \subseteq I_{i,r_i+s_i}$, for all $r_i$ and $s_i$. Then there exists $k = (k_1, \ldots, k_d) \in \Nd$ such that for all $n = (n_1, \ldots, n_d)\geq k$, \[\Ass_A(I_{1,n_1}\cdots I_{d,n_d}C/J_1^{n_1} \cdots 
J_d^{n_d}C) = \Ass_A(I_{1,k_1}\cdots I_{d,k_d}C/J_1^{k_1} \cdots 
J_d^{k_d}C).\]
\end{theorem}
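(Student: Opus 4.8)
The plan is to realize all the modules $I_{1,n_1}\cdots I_{d,n_d}C/J_1^{n_1}\cdots J_d^{n_d}C$ at once as the homogeneous components of a single $\Nd$-graded module over the multi-Rees algebra of $J_1,\dots,J_d$, and then to invoke Theorem~\ref{genstability}.

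First I would set $R:=A[J_1t_1,\dots,J_dt_d]\subseteq A[t_1,\dots,t_d]$, the multi-Rees algebra, graded so that $\deg t_i=e_i$. Since $A$ is Noetherian and each $J_i$ is finitely generated, $R$ is a standard $\Nd$-graded Noetherian $A$-algebra with $R_n=J_1^{n_1}\cdots J_d^{n_d}\,t_1^{n_1}\cdots t_d^{n_d}$. By hypothesis~(iv) (which in particular gives $J_iI_{i,s}\subseteq I_{i,s+1}$ for every $s$), the $A$-submodule
\[
\mathcal I:=\bigoplus_{n\in\Nd}\bigl(I_{1,n_1}\cdots I_{d,n_d}\bigr)\,t_1^{n_1}\cdots t_d^{n_d}\ \subseteq\ A[t_1,\dots,t_d]
\]
is a graded $R$-submodule, and by~(ii) it contains $R$. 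Tensoring the inclusions $R\hookrightarrow\mathcal I\hookrightarrow A[t_1,\dots,t_d]$ over $A$ with $C$ and taking images inside $C[t_1,\dots,t_d]=C\otimes_AA[t_1,\dots,t_d]$ produces graded $R$-submodules $RC\subseteq\mathcal IC\subseteq C[t_1,\dots,t_d]$ whose degree-$n$ pieces are $J_1^{n_1}\cdots J_d^{n_d}C\subseteq I_{1,n_1}\cdots I_{d,n_d}C\subseteq C$; here $RC$ is the image of the finitely generated $R$-module $R\otimes_AC$, hence is a finitely generated $R$-module. I then set
\[
M:=\mathcal IC/RC,\qquad M_n=I_{1,n_1}\cdots I_{d,n_d}C\,/\,J_1^{n_1}\cdots J_d^{n_d}C .
\]
Thus $M$ is an $\Nd$-graded (hence bounded below) $R$-module, each of whose homogeneous components is a finitely generated $A$-module; in general $M$ is neither finitely generated nor quasi-finite over $R$, which is exactly the setting Section~4 is designed for.

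I would then apply Theorem~\ref{genstability} to $M$ (verifying, as part of this, that the obstruction that theorem isolates does not arise in our situation), obtaining $k=(k_1,\dots,k_d)\in\Nd$ with $\Ass_A(M_n)=\Ass_A(M_k)$ for all $n\ge k$; rewriting $M_n$ yields Theorem~\ref{introthm}. The remaining hypotheses enter precisely in this last step: (i) is the normalization making the degree-zero slices correct, (ii) gives the inclusion $R\subseteq\mathcal I$, and (iii) is what forces the chains $\{I_{i,n_i}\}_{n_i}$ — hence the products $I_{1,n_1}\cdots I_{d,n_d}C$ — to be decreasing, the monotonicity without which even the singly graded case with $J=0$ fails to stabilize. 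I would also record the exact sequences $0\to M_n\to C/J_1^{n_1}\cdots J_d^{n_d}C\to C/I_{1,n_1}\cdots I_{d,n_d}C\to 0$, which exhibit $\bigcup_n\Ass_A(M_n)$ as a subset of the finite set $\bigcup_n\Ass_A(C/J_1^{n_1}\cdots J_d^{n_d}C)$ recalled in the introduction; the force of the theorem is that $\Ass_A(M_n)$ does not keep running around this finite set periodically.

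The substantive work is already inside Theorem~\ref{genstability}, namely the reduction of such an $M$ to the quasi-finite module $M/L$, $L:=H^0_{R_+}(M)$ (where Theorem~\ref{main} applies), together with the control of $\Ass_A(L_n)$. Consequently the only genuine task here is the bookkeeping of the construction above and the verification that $M$ satisfies the hypotheses of Theorem~\ref{genstability} (equivalently, that the obstruction coming from $L$ is benign for this $M$); I expect that verification to be the one place requiring real care, the algebraic manipulations being routine.
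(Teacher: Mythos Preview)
Your framework is exactly the paper's: form the multi-Rees algebra $\R=\bigoplus_{n\in\Nd}J^n$, set $M:=\bigoplus_{n\in\Nd}I_nC/J^nC$ with $I_n:=I_{1,n_1}\cdots I_{d,n_d}$, observe that $\Ass_A(M)$ is finite via the inclusion $M_n\hookrightarrow C/J^nC$, and then reduce to Theorem~\ref{genstability}. The paper packages this as Theorem~\ref{multimainthm} and derives Theorem~\ref{introthm} as an immediate corollary.

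The gap is that you stop precisely where the actual work begins. Theorem~\ref{genstability} is \emph{conditional}: it asserts (i)~$\Rightarrow$~(ii)~$\Rightarrow$~(iii), so to obtain stability of $\Ass_A(M_n)$ you must first establish stability of $\Ass_A(L_n)$ for $L=H^0_{\R_+}(M)$. You flag this as ``the one place requiring real care'' but offer no mechanism, and the verification is not routine bookkeeping. The paper's argument runs as follows: one first computes $L_n=\bigl((J^{n+r}C:_CJ^r)\cap I_nC\bigr)/J^nC$ for suitable $r$; then one invokes a nontrivial cancellation result (Lemma~\ref{cancellation}, imported from \cite{KS} and \cite{KMR}) asserting that modulo $T:=H^0_J(C)$ one has $(J^{n+r}C:_CJ^r)=J^nC$ for $n$ large; combined with multigraded Artin--Rees (giving $T\cap J^nC=0$ for $n\gg0$) this yields $L_n\cong I_nC\cap T$ for all large $n$. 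Only \emph{now} does hypothesis~(iii) do its real work: since $n\mapsto I_nC$ is decreasing, the $L_n$ sit as a decreasing chain inside the fixed finitely generated $A$-module $T$, and an elementary argument then forces $\Ass_A(L_n)$ to stabilize. Your proposal correctly senses that~(iii) supplies monotonicity, but without the identification $L_n\cong I_nC\cap T$ (which rests on Lemma~\ref{cancellation}) there is nothing for that monotonicity to act on.
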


\section{Quasi-finite multigraded modules}

Throughout this section $R$ will denote a standard $\Nd$-graded Noetherian, 
commutative ring with identity, where $\NN$ denote the set of non-negative integers. We denote the degree $(0,\ldots, 0)$ component of $R$ by $A$. Here, we use the term `standard' 
in the sense of Stanley, i.e., a standard
$\Nd$-graded ring is one which is generated in total degree one. Rather than use excessive notation, we will simplify our notation 
and use $n\in \Nd$ to indicate $d$-tuples. We will use subscripts to 
denote components of $d$-tuples. Thus 
$n_i$ means the $i^{\textrm{th}}$ component of $n = (n_1,\ldots, n_d) \in 
\Nd$. Superscripts will be used to indicate lists of $d$-tuples. Given $n,m\in \Nd$, we will write 
$n\geq m$ if $n_i\geq m_i$, for all $1\leq i\leq d$. Finally, we extend all of this notation in the obvious way to $\ZZ ^d$.

\s \textbf{Notation.} Let $R = \bigoplus_{n\in \Nd}R_d$ be a Noetherian standard $\Nd$-graded ring as above.

\begin{enumerate}
\item[(a)] We will write 
$R_+$ for the ideal consisting of all sums of homogeneous elements $x_n \in R_n$ such that $n_i\geq 1$, for all 
$1\leq i\leq d$. In other words, $R_+$ denotes the ideal of $R$ generated by 
$R_{(1,\ldots, 1)}$. 

\item[(b)] \textit{Throughout this paper}, by a multigraded $R$-module we mean a $\ZZ ^d$-graded $R$-module $M = \bigoplus_{n\in \ZZ ^d} M_n$ such that : 

\begin{enumerate}
\item[(i)] Each component $M_n$ of $M$ is a finitely generated $A$-module. 
\item[(ii)] There exists $a \in \ZZ ^d$ such that $M_n = 0$ for $n \leq a$.
\end{enumerate}

\end{enumerate}

\medskip
\noindent
\s \textbf{Observation.}\label{observation} In the notation above, suppose that $x\in M$ and $R_c\cdot x = 0$, for 
some $c\in \Nd$ with $c \not = (0,\ldots,0)$. Then $x\in H^0_{R_+}(M)$. To see this, suppose that 
$t\in \NN$ is the largest component of $c$, so $t > 0$. Then $R_{(t-c_1,\ldots, t-c_d)}\cdot 
R_c\cdot x = 0$. In other words, $R_{(t,\ldots, t)}\cdot x = 0$. Since $R$ is standard graded, 
$R_+^t \cdot x = 0$, therefore $x\in H^0_{R_+}(M)$.  

We now define multigraded quasi-finite modules.

\begin{definition}\label{quasfinitedef}
Let $M = \bigoplus_{n\in \ZZ ^d} M_n$ as above be an multigraded $R$-module. We say $M$ is a \emph{quasi-finite} $R$-module if there exists $b \in \ZZ ^d$ such that $H^0_{R_+}(M)_n = 0$ for $n \geq b$.
\end{definition}

\begin{remark} \label{multiquasiexample} Notice that if $M$ is an $\ZZ ^d$-graded $R$-module as above, then it follows immediately from the definition that $M/H^0_{R_+}(M)$ is quasi-finite as a $\ZZ ^d$-graded $R$-module. More generally, the next propostion shows that a wide range of multigraded modules are quasi-finite. 
\end{remark}

\begin{proposition}\label{quasigradepositive} Let $U\subseteq V$ be multigraded $R$-modules such that $U$ is finitely generated over $R$ and $\grade(R_+,V) > 0$. Then $M := U/V$ is a quasi-finite $R$-module. 
\end{proposition}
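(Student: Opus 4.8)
The plan is to relate $H^0_{R_+}(M)$ to the local cohomology of the finitely generated module $U$ by means of the long exact sequence attached to $0 \to U \to V \to M \to 0$, and then to invoke the multigraded analogue of the classical finiteness theorem for local cohomology of finitely generated graded modules. (Since $U \subseteq V$, the module $M$ must be read here as $V/U$.)

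First I would record that the hypothesis $\grade(R_+,V)>0$ means precisely that $H^0_{R_+}(V)=0$; this is the definition of grade, together with the fact that $R_+$ is finitely generated, being generated by the finite $A$-module $R_{(1,\ldots,1)}$. Applying $H^{\bullet}_{R_+}(-)$ to $0 \to U \to V \to M \to 0$ produces an exact sequence in the category of $\ZZ ^d$-graded modules (with degree-zero maps) that contains the stretch
\[ H^0_{R_+}(V) \longrightarrow H^0_{R_+}(M) \longrightarrow H^1_{R_+}(U); \]
since the left-hand term vanishes, $H^0_{R_+}(M)_n$ injects into $H^1_{R_+}(U)_n$ for every $n \in \ZZ ^d$. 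Hence it suffices to produce $b \in \ZZ ^d$ with $H^1_{R_+}(U)_n = 0$ for all $n \geq b$: this forces $H^0_{R_+}(M)_n = 0$ for all $n \geq b$, which is exactly the quasi-finiteness of $M$.

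The substantive ingredient is therefore the following, applied to $W = U$ and $i = 1$: if $W$ is a finitely generated $\ZZ ^d$-graded module over a standard $\Nd$-graded Noetherian ring $R$, then for every $i$ there is $b \in \ZZ ^d$ with $H^i_{R_+}(W)_n = 0$ for all $n \geq b$ --- equivalently, the multigraded $a$-invariants of $W$ are finite. This is the $\Nd$-graded counterpart of the classical finiteness theorem for local cohomology of finitely generated graded modules. To prove it I would, by a standard dimension-shifting argument along a finite graded free resolution of $W$, reduce (for $i \geq 1$) to the case $W = R$, and then compute $H^i_{R_+}(R)$ from the \v{C}ech complex on an $A$-module generating set $y_1,\ldots,y_s$ of $R_{(1,\ldots,1)}$, each $y_j$ homogeneous of degree $(1,\ldots,1)$; the structural input is the standardness identity $R_{(t,\ldots,t)}R_m = R_{m+(t,\ldots,t)}$ for $m \geq (0,\ldots,0)$ --- equivalently $(R/R_+^{t})_n = 0$ for $n \geq (t,\ldots,t)$ --- which controls that complex in large multidegrees. (The case $i=0$ is easier and direct for any finitely generated $W$, since $H^0_{R_+}(W)$ is finitely generated and killed by a power of $R_+$, hence vanishes in all sufficiently large multidegrees.) The step I expect to be the main obstacle is precisely this multigraded finiteness --- and within it, the upgrade from the coordinatewise vanishing that comes cheaply to the single uniform bound $b$ demanded by the definition of quasi-finiteness. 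Granting it, Proposition \ref{quasigradepositive} follows at once from the embedding $H^0_{R_+}(M)_n \hookrightarrow H^1_{R_+}(U)_n$ above.
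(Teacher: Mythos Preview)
Your proposal is correct and follows essentially the same route as the paper: apply $H^{\bullet}_{R_+}(-)$ to $0 \to U \to V \to M \to 0$, use $H^0_{R_+}(V)=0$ from the grade hypothesis to embed $H^0_{R_+}(M)$ into $H^1_{R_+}(U)$, and then invoke the vanishing of $H^1_{R_+}(U)_n$ for $n\geq b$ coming from finite generation of $U$. The paper simply asserts this last multigraded vanishing as known, whereas you outline a proof of it via free resolutions and the \v{C}ech complex; that extra detail is welcome but does not constitute a different strategy.
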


\begin{proof}  Consider the exact sequence
\[
0 \rt U \rt V \rt M  \rt 0.
\]
Taking local cohomology \wrt \ $R_+$ and using that $H^0_{R_+}(V) = 0$ we get
\[
0 \rt H^0_{R_+}(M ) \rt H^1_{R_+}(U).
\]
But, $U$ is a finitely generated $R$-module, so there exists $b \in \ZZ ^d$ such that $H^1_{R_+}(U)_n = 0$, for $n\in \Nd$ with $n\geq b$. It follows that $M$ is quasi-finite.
\end{proof}

For example, if $\R$ is the Rees algebra determined by an ideal of positive grade and $V$ is the polynomial ring  containing $\R$, 
then, with $U = \R$, $M := V/U$ is an infinitely generated quasi-finite $\R$-module. (See section three.)

The following proposition is well known in the case of finitely generated graded or multigraded modules. Since $R$ is Noetherian, the proofs are the same even if $M$ is not finitely generated over $R$. 

\begin{proposition}\label{primesofM} Let $R$ be a not necessarily standard $\Nd$-graded Noetherian ring and $M$ as above be a multigraded $R$-module. Then $P\in \Ass_A(M)$ if and only if $P\in \Ass_A(M_n)$ for some $n\in \ZZ^d$.  Moreover,  $P \in \Ass_A(M_n)$ for some $n\in \ZZ ^d$ if and only if there exists
a prime $Q\in \Ass_R(M)$ with $Q\cap A = P$. Consequently, the following statements hold :
\begin{enumerate}

\item[(i)]  $\bigcup _{n\in \ZZ ^d} \Ass_A(M_n)$ is finite if and only if $\Ass_A(M)$ is finite. 
\item[(ii)] If $\Ass_R(M)$ is finite, then $\bigcup _{n\in \ZZ ^d} \Ass_A(M_n)$ is finite.
\end{enumerate}
\end{proposition}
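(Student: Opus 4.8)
The plan is to prove the two main equivalences first and then derive (i) and (ii) as formal consequences. For the first equivalence, $P \in \Ass_A(M) \iff P \in \Ass_A(M_n)$ for some $n$, the backward direction is immediate: if $x \in M_n$ is annihilated exactly by $P$, then $x$, viewed in $M$, also has annihilator $P$, so $P \in \Ass_A(M)$. For the forward direction, suppose $P = \ann_A(x)$ for some $x \in M$. Decompose $x = \sum_{i} x_{n^i}$ into its finitely many nonzero homogeneous components. I would argue that $P = \ann_A(x_{n^i})$ for at least one $i$: clearly $P \subseteq \ann_A(x_{n^i})$ is false in general, so instead I would pass to a suitable localization or use the standard graded trick. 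Concretely, $P \in \Ass_A(M)$ means $A/P \hookrightarrow M$; this embedding need not be graded, but since $A$ sits in degree $0$ and $A/P$ is ``concentrated in degree $0$'' in the appropriate sense, one invokes the classical fact (true for any $\ZZ^d$-grading, no standardness needed) that associated primes of a graded module over a graded ring are homogeneous and are realized by homogeneous elements. Applying this to the $A$-module $M$ graded by the induced $\ZZ^d$-grading — noting $A$ itself is the degree-$0$ piece — gives that $P$ is the annihilator of a homogeneous element, i.e. of some $x_{n} \in M_n$, hence $P \in \Ass_A(M_n)$.

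For the second equivalence, relating $\Ass_A(M_n)$ for some $n$ to the existence of $Q \in \Ass_R(M)$ with $Q \cap A = P$: the backward direction follows from the first equivalence together with the standard contraction behavior of associated primes — if $Q \in \Ass_R(M)$ then $Q \cap A \in \Ass_A(M)$ (this uses that $M$ is, after restriction of scalars, an $A$-module, and $Q = \ann_R(m)$ forces $Q \cap A \subseteq \ann_A(m)$ with equality after a prime-avoidance argument), and then apply the first part. For the forward direction, if $P \in \Ass_A(M)$, I would use that $\Ass_R(M) \neq \emptyset$ whenever $M \neq 0$ and that the union of contractions $\{Q \cap A : Q \in \Ass_R(M)\}$ exhausts $\Ass_A(M)$; this is a standard fact for a module over a Noetherian algebra that is Noetherian as a ring — here one must be slightly careful since $M$ is not assumed finitely generated over $R$, but the argument only needs $R$ Noetherian (so $M$ has associated primes) and the exactness/localization properties of $\Ass$, which hold regardless. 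I expect the main obstacle to be precisely this last point: making the passage between $\Ass_A$ and $\Ass_R$ rigorous without finite generation over $R$. The key observation to push through is that for \emph{any} $R$-module $N$ over a Noetherian ring $R$, and any subring $A$, one has $\Ass_A(N) = \{Q \cap A : Q \in \Ass_R(N)\}$ — this is Bourbaki, and it requires no finiteness on $N$, only that $R$ (hence $A$) be Noetherian. This single lemma yields both directions of the second equivalence when combined with the first.

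Finally, (i) and (ii) are formal. For (i): by the first equivalence, $\bigcup_{n} \Ass_A(M_n) = \Ass_A(M)$ as sets, so one is finite iff the other is. For (ii): if $\Ass_R(M)$ is finite, then by the second equivalence $\bigcup_{n} \Ass_A(M_n) = \{Q \cap A : Q \in \Ass_R(M)\}$ is the image of a finite set under the contraction map, hence finite. I would present the two equivalences as the substance of the proof and dispatch (i), (ii) in a single closing sentence each.
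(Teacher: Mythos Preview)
The paper does not actually supply a proof of this proposition: it simply declares the result well known in the finitely generated case and asserts that ``since $R$ is Noetherian, the proofs are the same even if $M$ is not finitely generated over $R$.'' Your proposal is a correct and reasonably complete fleshing-out of precisely that standard argument --- the homogeneous realization of associated primes for the first equivalence, and the Bourbaki/Matsumura fact $\Ass_A(N)=\{Q\cap A:Q\in\Ass_R(N)\}$ (valid for arbitrary $N$ once $R$ is Noetherian) for the second --- so there is nothing substantive to compare.

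One small slip worth correcting: you write that ``$P\subseteq\ann_A(x_{n^i})$ is false in general,'' but in fact it is always true --- if $a\in P$ kills $x=\sum_i x_{n^i}$, then $ax=0$ forces $ax_{n^i}=0$ for every $i$ by homogeneity. What is not immediate is \emph{equality} for some $i$, and that is exactly what your invocation of the ``homogeneous annihilator'' fact (equivalently: $P=\bigcap_i\ann_A(x_{n^i})$ with $P$ prime and the intersection finite, so $P=\ann_A(x_{n^i})$ for some $i$) delivers. With that correction your write-up is sound and strictly more detailed than what the paper offers.
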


Note also that for any multiplicatively closed subset $S\subseteq A$, $R_S$ is a standard $\Nd$-graded Noetherian $A_S$-algebra and $M_S$ is a multigraded $R_S$-module with the original gradings preserved since the elements of $S$ have degree zero. It follows easily from this that if 
$P\subseteq A$ is disjoint from $S$ then $P$ is the annihilator of an element of degree $n$ in $M$ if and only if $P_S$ is the annhilator of an element of degree $n$ in $M_S$. We will use this observation freely throughout this paper.

Our first theorem shows that the quasi-finite notion is sufficient to guarantee asymptotic stability of prime divisors. 
\begin{theorem}\label{main} Let $R$ be a standard $\Nd$-graded Noetherian ring and $M$ as above be a quasi-finite $R$-module. Then there exists $b\in \ZZ ^d$ such that for all $n, m\in \ZZ ^d$ with 
$b\leq n\leq m$, $\Ass _A(M_n) \subseteq \Ass_A(M_m)$. Moreover, if $\Ass_A (M)$ is a finite set, then there exists $k\in \ZZ ^d$ such that for all $n \in \ZZ ^d$ with $n\geq k$, 
$\Ass_A( M_n) = \Ass_A (M_k)$. 
\end{theorem}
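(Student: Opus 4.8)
The plan is to prove the monotonicity assertion first and then to deduce the stability assertion from it by a formal argument. For the monotonicity part I would first reduce to unit-step increments: it suffices to produce $b \in \ZZ^d$ such that $\Ass_A(M_n) \sub \Ass_A(M_{n+e_i})$ for every $n \geq b$ and every standard basis vector $e_i$, since any $m \geq n \geq b$ is reached from $n$ by finitely many such steps, all of which stay in the region $\{v : v \geq b\}$. For $b$ I would take any bound with $H^0_{R_+}(M)_n = 0$ for all $n \geq b$; such a $b$ exists precisely because $M$ is quasi-finite. (If $M_n = 0$ there is nothing to prove, so assume $M_n \neq 0$.)

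For the unit step, fix $n \geq b$ and $P \in \Ass_A(M_n)$, say $P = \ann_A(x)$ with $x \in M_n$. The plan is to multiply $x$ into degree $n+e_i$ and recover $P$ as the annihilator of the product; since this needs $P$ maximal, I would first localize at $P$. Replacing $A, R, M$ by $A_P, R_P, M_P$ is harmless: $R_P$ is again a standard $\Nd$-graded Noetherian ring, $M_P$ is again a multigraded $R_P$-module with $(M_P)_n = (M_n)_P$, quasi-finiteness passes to $M_P$ with the same bound $b$ (elements killed by a power of $(R_P)_+$ come from $H^0_{R_+}(M)_P$), and the property "$P$ is the annihilator of an element of $M_n$" is preserved in both directions by the localization observation recorded after Proposition \ref{primesofM}. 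So it suffices to treat the case $(A, \m)$ local with $\m = \ann_A(x)$ for some $x \in M_n$ and to show $\m \in \Ass_A(M_{n+e_i})$. Since $n \geq b$, $x \notin H^0_{R_+}(M)$, so by the Observation preceding Definition \ref{quasfinitedef} (applied with $c = e_i \neq (0,\dots,0)$) we have $R_{e_i} \cdot x \neq 0$; choose $r \in R_{e_i}$ with $rx \neq 0$. Then $\m = \ann_A(x) \sub \ann_A(rx) \subsetneq A$, so $\ann_A(rx) = \m$ by maximality, whence $\m \in \Ass_A(M_{n+e_i})$. Undoing the localization finishes the first assertion.

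For the second assertion, assume $\Ass_A(M)$ is finite; then $\Sigma := \bigcup_{n \in \ZZ^d} \Ass_A(M_n)$ is finite by Proposition \ref{primesofM}(i). Along the chain $n^{(j)} := b + j(1,\dots,1)$, $j \geq 0$, the first assertion shows the sets $\Ass_A(M_{n^{(j)}})$ form a non-decreasing sequence of subsets of $\Sigma$, hence are eventually constant, say for $j \geq j_0$; put $k := n^{(j_0)}$. For arbitrary $n \geq k$, the first assertion gives $\Ass_A(M_k) \sub \Ass_A(M_n)$, and choosing $j$ with $n \leq n^{(j_0+j)}$ and applying it once more gives $\Ass_A(M_n) \sub \Ass_A(M_{n^{(j_0+j)}}) = \Ass_A(M_k)$; thus $\Ass_A(M_n) = \Ass_A(M_k)$.

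The step I expect to need the most care is the localization reduction: one must be sure that quasi-finiteness, the defining axioms of a multigraded module, and the property of being the annihilator of an element of a prescribed degree all survive passage to $A_P$. The excerpt's observation following Proposition \ref{primesofM} is exactly what makes this routine; after it, the argument is just the classical "push the associated prime up one degree" trick, with quasi-finiteness providing the single essential input $R_{e_i} \cdot x \neq 0$.
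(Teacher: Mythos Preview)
Your proof is correct and follows essentially the same approach as the paper's: choose $b$ from quasi-finiteness, localize at $P$, and use Observation~\ref{observation} to push the witness $x$ into higher degree without killing it. The only cosmetic differences are that the paper moves directly from degree $b+c$ to degree $b+h$ via a single element of $R_{h-c}$ rather than composing unit steps $e_i$, and for the stability statement the paper picks $k$ as an upper bound of the degrees where the finitely many primes first appear rather than climbing the diagonal; neither difference is substantive.
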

\begin{proof}
 Now, let $b\in \ZZ ^d$ be such that $H^0_{R_+}(M)_b = 0 $ for all $n\in \ZZ ^d$ with $n\geq b$.  If $M_n = 0$ for all $n\in \ZZ^d$ with $n\geq b$, then $\Ass_A(M_n) = \emptyset$ for all such $n$ and the conclusions of the theorem readily follow. Otherwise, $M_n \not = 0$ for some $n\geq b$. Without loss of generality, we may take $n = b$ (by increasing $b$ if necessary) and assume that $M_b \not = 0$. Note that since $M_b\not \subseteq H^0_{R_+}(M)$, it follows from Observation \ref{observation} that for all $n\geq b$, $R_{n-b}\cdot M_b \not = 0$. In particular, $M_n \not = 0$.  

Now, take $c < h\in \Nd$. We first show that $\Ass_A (M_{b+c}) \subseteq \Ass_A (M_{b+h})$. Let 
$P \in \Ass _A (M_{b+c})$. Without loss generality, we may assume that $A$ is local at $P$ and 
$P = (0:u)$, for $0 \not = u \in M_{b+c}$. We now note that $R_{h-c}\cdot u \not = 0$. Indeed, suppose 
$R_{h-c}\cdot u = 0$. Then by Observation \ref{observation} above, $u \in H^0_{R_+}(M)$. Since $u \not = 0$, this contradicts our choice of $b$. Thus, $R_{h-c}\cdot u \not = 0$. Therefore, $xu \not = 0$, for some $x\in R_{h-c}$. Since $P\cdot xu =0$, $P\in \Ass_A (M_{b+h})$, as required. 

Now, suppose $\Ass_A(M)$ is finite. Then by Proposition \ref{primesofM}, $\bigcup_{n\in \ZZ ^d} \Ass_A(M_n)$ is a finite set.
Now, let $\{P_1, \ldots, P_r\}$ denote the prime ideals $\bigcup_{n\geq b}\Ass_A (M_n)$. We can write 
each $P_j = (0:_Au_j)$, where $u_j \in M_{h^j}$, for $h^1, \ldots, h^r \in \ZZ ^d$, with each $h^j\geq b$.  Choose $k\in \ZZ ^d$ such that $h^j\leq k$, for all $1\leq j\leq r$. Then, by the paragraph above, $P_j\in \Ass_A (M_k)$ for all 
$j$ and hence, $P_j\in \Ass_A M_n$, for all $k\leq n$. On the other hand, if $n\geq k$, then $n\geq b$. Thus, if 
$P\in \Ass_A (M_n)$, $P = P_j$, for some $1\leq j\leq r$. Thus, for all $n\in \ZZ ^d$ with $n\geq k$, 
$\Ass_A(M_n) = \{P_1,\ldots, P_r\} = \Ass_A (M_k)$, and this completes the proof of the proposition. \end{proof}

\section{First applications}
Suppose we have a homogeneous inclusion of singly graded Noetherian $R$-algebras $R \subseteq S$. In other words, $S = \bigoplus_{n\geq 0} S_n$ is a Noetherian ring
with $S_0 = R_0 = A$ and $S_n \supseteq R_n$ for all $n \geq 0$. We assume $R$ is standard graded, but \textbf{do not} assume that $S$ is standard graded.
Consider the $R$-module $E = S/R$. Notice that
\[
E= \bigoplus_{n = 0}^{\infty} \frac{S_n}{R_n} \quad \text{as an $A$-module}.
\]
Note that $E$ need not be a finitely generated $R$-module. We however have the following :
\begin{lemma}
\label{Rees}
$\Ass_A (E)$ is a finite set.
\end{lemma}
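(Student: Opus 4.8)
The plan is to study $E=S/R$ not through its (nonexistent) finite generation as an $R$-module, but by realizing each graded piece $E_m=S_m/R_m$ as the degree-$m$ component of $S/(R_1S)^m$, and then invoking Brodmann's theorem \cite{B} inside the \emph{Noetherian ring} $S$. The conceptual point is that although $E$ is generally not a finitely generated $R$-module, the ideal $I:=R_1S$ of $S$ \emph{is} an honest ideal of a Noetherian ring, and the powers $I^m$ encode the modules $E_m$.

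First I would record the identification $E_m\cong (S/I^m)_m$ as $A$-modules. Since $R$ is standard graded and $S$ is commutative, one checks by an easy induction that $I^m=R_1^mS=R_mS$ for all $m\geq 0$. As $R_m$ is homogeneous of degree $m$ and $R_m\subseteq S_m$, the ideal $R_mS$ is concentrated in degrees $\geq m$, with degree-$m$ component $R_mS_0=R_mA=R_m$. Hence $(S/I^m)_m=S_m/R_m=E_m$. This is the only computational step, and it is routine.

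Next I would apply Brodmann's theorem to the ideal $I$ of the Noetherian ring $S$: the sets $\Ass_S(S/I^m)$ are eventually constant, so $\mathcal W:=\bigcup_{m\geq 0}\Ass_S(S/I^m)$ is a finite set of primes of $S$; set $\mathcal W':=\{\mathfrak P\cap A:\mathfrak P\in\mathcal W\}$, a finite set of primes of $A$. Now $S$ is a finitely generated $A$-algebra, so each $S/I^m$ is a graded $S$-module whose graded pieces are finitely generated $A$-modules and which vanishes in negative degrees; thus Proposition \ref{primesofM}, applied with the (not necessarily standard) graded ring $S$, gives $\Ass_A(S/I^m)=\{\mathfrak P\cap A:\mathfrak P\in\Ass_S(S/I^m)\}\subseteq\mathcal W'$, and in particular $\Ass_A(E_m)=\Ass_A\big((S/I^m)_m\big)\subseteq\mathcal W'$ for every $m$.

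Finally, $E$ is a graded $R$-module with finitely generated $A$-components, vanishing in negative degrees, so Proposition \ref{primesofM} applied once more (now over $R$) yields $\Ass_A(E)=\bigcup_{m\geq 0}\Ass_A(E_m)\subseteq\mathcal W'$, which is finite. I do not anticipate a serious obstacle here: the delicate points are precisely that $E$ need not be finitely generated over $R$ and that $S$ need not be standard graded, and both are absorbed, respectively, by passing to the Noetherian ring $S$ before invoking Brodmann and by the fact that Proposition \ref{primesofM} is stated without a standardness hypothesis. The real content of the argument is the reduction in the second paragraph; everything after it is formal.
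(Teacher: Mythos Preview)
Your argument is correct, and it shares with the paper the key move of passing to the ideal $I=R_1S$ of the Noetherian ring $S$, but from there the two proofs diverge. The paper works with the associated graded ring $G=\gr_I(S)=\bigoplus_{p}I^p/I^{p+1}$, which it rewrites as the bigraded $A$-algebra $\bigoplus_{n,p}R_pS_n/R_{p+1}S_{n-1}$; since $G$ is Noetherian, $\Ass_A(G)$ is finite, and then the filtration $R_t\subseteq R_{t-1}S_1\subseteq\cdots\subseteq S_t$ exhibits $E_t=S_t/R_t$ as an iterated extension of bigraded pieces of $G$, forcing $\Ass_A(E_t)\subseteq\Ass_A(G)$. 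You instead identify $E_m$ directly with the degree-$m$ component of $S/I^m$ and invoke Brodmann's theorem in $S$ as a black box, then contract via Proposition~\ref{primesofM}. Your route is shorter and avoids the filtration bookkeeping; the paper's route is more self-contained (it does not cite Brodmann, effectively re-proving the needed finiteness from the Noetherianness of $G$) and makes the bigraded structure explicit, which is in the spirit of Rees's original technique. Both hinge on the same computation $I^m=R_mS$ and the fact that Proposition~\ref{primesofM} does not require standard grading.
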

\begin{proof}
We use a technique due to Rees \cite{Rees1}. Let $I$ denote the ideal of $S$ generated by $R_1$. Then, as graded ideals in $S$, for each $p\geq 0$, $I^p = \bigoplus_{n\geq 0}R_pS_n$ and $I^{p+1} = 
\bigoplus_{n\geq 0} R_{p+1}S_n$. Thus, as a graded $S$-module, $I^p/I^{p+1} = 
\bigoplus_{n\geq 0} R_pS_n/R_{p+1}S_{n-1}$. If we now write $G$ for the associated graded ring of $S$ with respect to $I$, we then have \[G = \bigoplus_{n,p\geq 0}R_pS_n/R_{p+1}S_{n-1},\]
and as such, $G$ can be viewed as a finitely generated (not necessarily standard) bigraded $A$-algebra. Thus, 
$\Ass_G(G)$ is finite, so by Proposition \ref{primesofM}, $\Ass_A(G)$ is finite. 

The advantage of  Rees's technique is due to the following observation. For $t\geq 1$, consider the filtration of   $A$-modules
\[
 R_t \subseteq  R_{t-1} S_1 \subseteq R_{t-2} S_2\subseteq \ldots \subseteq  R_{1}S_{t-1} \subseteq S_{t}.
 \]
By breaking this filtration into short exact sequences, it follows that 

\[ \Ass_A(S_t/R_t) \subseteq \bigcup_ {j =0}^t \Ass_A(R_{t-j}S_j/R_{t-j+1}S_{j-1}) \subseteq \Ass_A(G).\]
It follows that $\bigcup_{t\geq 1} \Ass_A(R_t/S_t)$ is finite, which gives what we want.
\end{proof}
\noindent
The next proposition is just a variation on Proposition \ref{multiquasiexample} and indicates when $E$ as above is quasi-finite.
\begin{proposition}\label{criterion} For $R, S$ and $E$ as above, the following are equivalent
\begin{enumerate}
\item
$E = S/R$ is a quasi-finite $R$-module.
\item
$S$ is a quasi-finite $R$-module.
\end{enumerate}
In particular if $\grade(R_+, S) > 0$ then $S/R$ is a quasi-finite module.
\end{proposition}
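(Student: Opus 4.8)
The plan is to prove the equivalence (1) $\Leftrightarrow$ (2) by relating $H^0_{R_+}(E)$ and $H^0_{R_+}(S)$ via the short exact sequence $0 \to R \to S \to E \to 0$ of $R$-modules. First I would take local cohomology with respect to $R_+$. Since $R$ is standard graded and finitely generated over $A$, it is a finitely generated $R$-module, so $H^i_{R_+}(R)_n = 0$ for $n$ sufficiently large in each coordinate; in particular there is $b\in\ZZ^d$ (here $d=1$) with $H^0_{R_+}(R)_n = 0$ and $H^1_{R_+}(R)_n = 0$ for $n \geq b$. (In fact $H^0_{R_+}(R) = 0$ outright when $R$ is a domain or has positive grade, but we do not need that.) The long exact sequence gives, in each degree $n$,
\[
H^0_{R_+}(R)_n \to H^0_{R_+}(S)_n \to H^0_{R_+}(E)_n \to H^1_{R_+}(R)_n.
\]
For $n \geq b$ the outer two terms vanish, so $H^0_{R_+}(S)_n \cong H^0_{R_+}(E)_n$ for all such $n$. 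Hence $H^0_{R_+}(S)_n = 0$ for $n$ large if and only if $H^0_{R_+}(E)_n = 0$ for $n$ large, which is exactly the statement that $S$ is quasi-finite iff $E$ is quasi-finite. This settles (1) $\Leftrightarrow$ (2).

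For the final assertion, suppose $\grade(R_+, S) > 0$. Then $H^0_{R_+}(S) = 0$, so in particular $H^0_{R_+}(S)_n = 0$ for all $n$, and $S$ is (trivially) quasi-finite as an $R$-module; alternatively one can invoke Proposition \ref{quasigradepositive} directly with $U = R$, $V = S$, noting $U$ is finitely generated over $R$ and $\grade(R_+, V) > 0$, to conclude $E = S/R$ is quasi-finite. Either way the conclusion follows from what has already been established.

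The only point requiring a little care is the claim that $R$ is a finitely generated $R$-module with $H^i_{R_+}(R)_n$ vanishing for $n$ large in each coordinate — but this is immediate since $R = A[R_1]$ with $R_1$ a finite $A$-module makes $R$ Noetherian and standard graded, and the vanishing of local cohomology in high degrees for finitely generated modules over such rings is exactly the input used already in the proof of Proposition \ref{quasigradepositive}. So there is no real obstacle here; the proposition is essentially a bookkeeping consequence of the six-term exact sequence, and the main thing to get right is tracking the degrees $n \geq b$ on which the comparison isomorphism holds.
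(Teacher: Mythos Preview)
Your proof is correct and follows essentially the same approach as the paper: both take local cohomology of the short exact sequence $0 \to R \to S \to E \to 0$ with respect to $R_+$ and use that $H^i_{R_+}(R)_n = 0$ for all $n \gg 0$ to compare $H^0_{R_+}(S)_n$ with $H^0_{R_+}(E)_n$ in large degree. Your write-up simply makes explicit the four-term exact sequence and the degree bound that the paper leaves implicit.
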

\begin{proof}
This follows from taking
local cohomology (with respect to $R_+$) of the short exact sequence 
\[
0 \xar R \xar S \xar S/R \xar 0
\]
and noting that for each $i \geq 0$ we have $H^i_{R_+}(R)_j = 0$ for all $j \gg 0$.
\end{proof}

The following corollary is an immediate consequence of Theorem \ref{main}, Lemma \ref{Rees}, and Proposition \ref{criterion}. It recovers from our perspective a special case of Theorem 1.1 from \cite{H}, though in our case, we do not need to assume that $S$ is standard graded. 

\begin{corollary}\label{stablegraded} Let $R\subseteq S$ be as in the previous proposition. If $\grade(R_+,S) > 0$, then 
$\Ass_A(S_n/R_n)$ is independent of $n$, for $n$ sufficiently large.
\end{corollary}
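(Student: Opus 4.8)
The plan is to realize the module $S_n/R_n$ as the degree-$n$ graded component of a single quasi-finite module over a standard graded ring, and then invoke Theorem \ref{main}. The natural candidate is $E = S/R$, viewed as an $R$-module. By Proposition \ref{criterion}, the hypothesis $\grade(R_+, S) > 0$ guarantees that $E$ is a quasi-finite $R$-module. Moreover, by Lemma \ref{Rees}, $\Ass_A(E) = \bigcup_{n \geq 0} \Ass_A(S_n/R_n)$ is a finite set, and by Proposition \ref{primesofM} this is equivalent to $\Ass_A(E)$ being finite. So the two running hypotheses needed to apply the second assertion of Theorem \ref{main} to $M = E$ are in place.

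Next I would simply apply Theorem \ref{main} in the case $d = 1$. Since $E = \bigoplus_{n \geq 0} S_n/R_n$ has $(S/R)_n = S_n/R_n$ as its degree-$n$ component, the theorem yields an integer $k$ such that $\Ass_A(S_n/R_n) = \Ass_A(S_k/R_k)$ for all $n \geq k$. That is precisely the assertion that $\Ass_A(S_n/R_n)$ is independent of $n$ for $n$ sufficiently large.

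The only point requiring a word of care is bookkeeping: one must check that the $\ZZ$-grading on $E = S/R$ inherited from $S$ makes $E$ a multigraded $R$-module in the sense of the standing conventions of Section 2, i.e., that each $S_n/R_n$ is a finitely generated $A$-module (clear, as $S_n$ is) and that $(S/R)_n = 0$ for $n \ll 0$ (clear, as $S_n = 0$ for $n < 0$ and $S_0 = R_0$). There is no real obstacle here; everything has been arranged by the preceding three results, and the corollary is genuinely immediate once one observes that "quasi-finite $+$ finitely many associated primes" is exactly the input Theorem \ref{main} demands. The mild extra generality over \cite{H} — not assuming $S$ standard graded — costs nothing, since neither Proposition \ref{criterion} nor Lemma \ref{Rees} nor Theorem \ref{main} uses standardness of $S$.
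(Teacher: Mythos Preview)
Your proposal is correct and follows exactly the paper's approach: the corollary is stated as an immediate consequence of Theorem \ref{main}, Lemma \ref{Rees}, and Proposition \ref{criterion}, and you have simply spelled out how those three pieces fit together. The extra bookkeeping you supply (checking that $E$ satisfies the standing conventions on multigraded modules) is routine and consistent with the paper's setup.
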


\begin{remark}\label{hpvexample} It should be noted that the conclusion of Corollary \ref{stablegraded} can fail if $\grade(R_+,S) = 0$. Indeed, in \cite{JPV}, Example 3.4, one has $S_n/R_n = 0$ for $n$ odd and $S_n/R_n \not = 0$ for $n$ even. Thus, for this example, $\Ass_A(S_n/R_n)$ is not stable for $n$ large. 
\end{remark}

In the example below, we use a result due to Herzog, Hibi and Trung from their paper \cite{HHT}.

\begin{example}
Let $A = K[X_1,\ldots,X_d]$ be a polynomial ring over a field $K$. Let $I_1, \ldots, I_r$ be monomial ideals in $A$. Then there exists $k\in \NN$ such that for all $n\geq k$,
\[
\Ass_A \frac{ \bigcap_{i=1}^{r} I_i^n}{\left(\bigcap_{i=1}^{r} I_i \right)^n} = \Ass_A \frac{ \bigcap_{i=1}^{r} I_i^{k}}{\left(\bigcap_{i=1}^{r} I_i \right)^{k}}.
\]
\end{example}
\begin{proof}
Consider the algebra $S = \bigoplus_{n \geq 0} \bigcap_{i=1}^{r} I_i^n$. By \cite[Corollary 1.3]{HHT} we get that $S$ is a finitely generated $A$-algebra. Notice that $R = \bigoplus_{n \geq 0}\left(\bigcap_{i=1}^{r} I_i \right)^n$ is a standard graded $A$-subalgebra of $S$.
Since $S$ is a domain we have that $\grade(R_+, S) > 0$. The result now follows by Corollary \ref{stablegraded}. 
\end{proof}

We continue with our applications in the singly graded case, by way of illustrating the strength of the quasi-finite condition. We begin by letting $J\subseteq A$ be an ideal and $\{ I_n \}_{n\geq 0}$ a filtration of ideals in $A$ satisfying the following properties : $I_0 = A$, $J^n\subseteq I_n$ for all $n \geq 0$ and $J_r\cdot I_s \subseteq I_{r+s}$, for
all $r$ and $s$. We set $\R := \bigoplus_{n\geq 0} J^n$, the Rees algebra of $A$ with respect to $J$. For a
finitely generated $A$-module $C$, we write $U:= \bigoplus_{n\geq 0} I_nC$, a not necessarily
finite $\R$-module. Let $V := \bigoplus_{n\geq 0}J^nC$ be the Rees module of $C$ with respect to $J$. We set $M_C := U/V$. Notice that since
$\bigcup_{n \geq 1} \Ass_A (C/J^nC)$ is a finite set, it follows that $\Ass_A M_C$ is a finite set. The following application is a special case of our main result in section five. 

\begin{proposition}\label{gradePositive} Maintain the notation established in the paragraph above. Suppose 
$\grade(J,C) > 0$. Then $M_C$ is quasi-finite. In particular, $\Ass_A (I_nC/J^nC)$ is stable for all $n \gg 0$.
\end{proposition}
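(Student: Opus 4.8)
The plan is to apply Proposition~\ref{quasigradepositive} to the inclusion $V \subseteq U$, together with Theorem~\ref{main}. First I would observe that $U = \bigoplus_{n\geq 0} I_nC$ is indeed an $\R$-module: the containment $J_r\cdot I_s \subseteq I_{r+s}$ is precisely what guarantees that $J^r \cdot (I_sC) \subseteq I_{r+s}C$, so multiplication by the degree-$r$ piece $J^r$ of $\R$ sends the degree-$s$ piece of $U$ into the degree-$(r+s)$ piece. Since $C$ is a finitely generated $A$-module and $A$ is Noetherian, each $I_nC$ is a finitely generated $A$-module, and $U$ vanishes in negative degrees, so $U$ is a multigraded ($d=1$) $R$-module in the sense of the paper. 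Similarly $V = \bigoplus_{n\geq 0} J^nC$ is the Rees module, which is a \emph{finitely generated} $\R$-module (it is generated in degree $0$ by generators of $C$, since $\R$ is standard graded). From $J^n \subseteq I_n$ we get $V \subseteq U$ as multigraded $\R$-modules, with quotient $M_C = U/V$.

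Next I would verify the grade hypothesis of Proposition~\ref{quasigradepositive}. We need $\grade(\R_+, V) > 0$. Here $\R_+ = \bigoplus_{n\geq 1} J^n$, and $V = \bigoplus_{n\geq 0} J^nC$. Since $\grade(J,C) > 0$, there is an element $x \in J$ that is a nonzerodivisor on $C$; viewing $x$ as a homogeneous element of $\R_1 \subseteq \R_+$, multiplication by $x$ on $V$ in each degree is just multiplication by $x$ on $J^nC \subseteq C$ landing in $J^{n+1}C$, which is injective because $x$ is a nonzerodivisor on $C$. Hence $x$ is a nonzerodivisor on $V$, so $\grade(\R_+,V) \geq 1 > 0$. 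Now Proposition~\ref{quasigradepositive}, applied with $U$ finitely generated over $\R$ replaced by the role of the finitely generated submodule --- wait, I must be careful: in Proposition~\ref{quasigradepositive} it is the submodule $U$ that is finitely generated and the ambient $V$ that has positive grade. So I would instead apply it with the submodule being $V$ (finitely generated over $\R$) and the ambient module being $U$; but then I need $\grade(\R_+, U) > 0$, which holds for the same reason: the element $x \in J$ is a nonzerodivisor on each $I_nC \subseteq C$. Thus $M_C = U/V$ is a quasi-finite $\R$-module by Proposition~\ref{quasigradepositive}.

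With quasi-finiteness in hand, I would invoke Theorem~\ref{main}. As noted in the paragraph preceding the proposition, $\Ass_A(M_C)$ is finite because $\bigcup_{n\geq 1}\Ass_A(C/J^nC)$ is finite and each $\Ass_A(I_nC/J^nC) = \Ass_A((M_C)_n) \subseteq \Ass_A(C/J^nC)$ (the latter containment since $I_nC/J^nC$ is a submodule of $C/J^nC$). Hence, by Proposition~\ref{primesofM}(i), $\bigcup_n \Ass_A((M_C)_n)$ is finite, so the second assertion of Theorem~\ref{main} applies (with $d=1$): there exists $k$ such that $\Ass_A((M_C)_n) = \Ass_A((M_C)_k)$ for all $n \geq k$, i.e., $\Ass_A(I_nC/J^nC)$ is stable for $n \gg 0$.

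The only genuinely delicate point is getting the direction of the hypotheses in Proposition~\ref{quasigradepositive} right --- that proposition needs the \emph{sub}module to be finitely generated over $\R$ and the \emph{ambient} module to have positive $\R_+$-grade, and here the natural finitely generated object is $V$ (the Rees module) sitting inside the possibly non-finite $U$. Once one checks that the nonzerodivisor $x \in J$ on $C$ restricts to a nonzerodivisor on $U$ (which is immediate since each $I_nC$ is a submodule of $C$), everything else is a direct application of the already-established results; no further obstacle is expected.
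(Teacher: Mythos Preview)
Your proposal is correct and follows essentially the same route as the paper: show $M_C$ is quasi-finite via Proposition~\ref{quasigradepositive}, then invoke finiteness of $\Ass_A(M_C)$ and Theorem~\ref{main}. Your self-correction about which module must carry the positive $\R_+$-grade is exactly right---Proposition~\ref{quasigradepositive} requires the \emph{ambient} module (here $U=\bigoplus I_nC$) to have positive grade and the \emph{sub}module (here the Rees module $V=\bigoplus J^nC$) to be finitely generated---and your check that a $C$-regular element $x\in J$ is a nonzerodivisor on $U$ (each $I_nC\subseteq C$) is precisely what is needed; the paper's one-line proof writes ``$\grade(\R_+,V)>0$'', which is either a notational slip or a silent reuse of the letter $V$ in the sense of Proposition~\ref{quasigradepositive}, so your version is in fact cleaner.
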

\begin{proof}
If $\grade(J,C) > 0$, then $\grade(\R_+, V) > 0$, so $M_C$ is a quasi-finite $\R$-module, by Proposition \ref{quasigradepositive}. For the second statement, $\Ass_A (M_C)$ is a finite set, by our comment in the previous paragraph, so the result now follows from Theorem \ref{main}.
\end{proof}

\section{The non-quasi-finite case}
We now return to the notation of section two. That is, $R$ denotes a Noetherian standard $\Nd$-graded $A$-algebra and $M$ is a (not necessarily finitely generated) multigraded $R$-module satisfying our standard hypotheses. Throughout we set $L := H^0_{R_+}(M)$. Notice that $M/L$ is quasi-finite,  since $H^{0}_{R_+}(M/L) = 0$.

We begin with the following proposition.

\begin{proposition}\label{union} Let $R$ be a Noetherian standard $\Nd$-graded $A$-algebra and $M$ a multigraded
$R$-module. Then,
$$\Ass_A (M) =  \Ass_A (L) \cup \Ass_A (M/L).$$
In particular, for any $q\in \ZZ ^d$, if $M_q/L_q \not = 0$, there exists $s\in \Nd$ such that $\Ass_A(M_q/L_q) \subseteq 
\Ass_A(M_{q+s})$. Moreover, if $\Ass_A M $ is a finite set, then there exists $k \in \Nd$ such that $\Ass_A(M_n/L_n) = \Ass_A(M_k/L_k)$ for all $n\in \Nd$ with $n\geq k$.
\end{proposition}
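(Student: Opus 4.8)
The plan is to establish the first identity $\Ass_A(M) = \Ass_A(L) \cup \Ass_A(M/L)$ from the short exact sequence $0 \to L \to M \to M/L \to 0$, and then bootstrap the consequences from Theorem \ref{main} applied to the quasi-finite module $M/L$. For the identity itself, the containment $\Ass_A(L) \subseteq \Ass_A(M)$ is standard since $L$ is a submodule of $M$, and $\Ass_A(M) \subseteq \Ass_A(L) \cup \Ass_A(M/L)$ is the usual exact-sequence inequality; these are valid for arbitrary $A$-modules, so no finiteness or graded hypotheses are needed. The only point requiring a word is $\Ass_A(M/L) \subseteq \Ass_A(M)$: here I would use that $L = H^0_{R_+}(M)$ is the $R_+$-torsion submodule, so that $\Ass_R(M/L) \cap V(R_+) = \emptyset$; any $Q \in \Ass_R(M/L)$ therefore does not contain $R_+$, and one checks (via Proposition \ref{primesofM} or directly, lifting an annihilator) that such a $Q$ is also in $\Ass_R(M)$. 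Contracting to $A$ and invoking Proposition \ref{primesofM} gives $\Ass_A(M/L) \subseteq \Ass_A(M)$. Alternatively, and perhaps more cleanly, one argues at the level of components: a prime $P \in \Ass_A(M_n/L_n)$ is the annihilator of the image of some $u \in M_n$; since $u \notin L$, $u$ is not $R_+$-torsion, and then an argument identical to the one in the proof of Theorem \ref{main} — multiplying $u$ by a suitable homogeneous element to kill off the part of the annihilator and push into a higher-degree component — shows $P \in \Ass_A(M_m)$ for suitable $m$, hence $P \in \Ass_A(M)$.

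For the ``in particular'' clause, suppose $M_q/L_q \neq 0$ and $P \in \Ass_A(M_q/L_q)$. Localizing at $P$ we may assume $P = (0 :_A \bar u)$ for some $u \in M_q$ with $\bar u \neq 0$ in $M_q/L_q$, i.e. $u \notin L = H^0_{R_+}(M)$. Applying Theorem \ref{main} to the quasi-finite module $M/L$: there is $b \in \ZZ^d$ with $H^0_{R_+}(M/L)_n = 0$ for $n \geq b$ — in fact $H^0_{R_+}(M/L) = 0$ entirely, as noted in the preamble to this section — so the argument of Theorem \ref{main} shows that for $s \in \Nd$ large enough, $R_s \cdot \bar u \neq 0$ in $M_{q+s}/L_{q+s}$, and picking $x \in R_s$ with $x\bar u \neq 0$ we get $P = (0 :_A x\bar u)$ (using that $P x u$ has image zero and that any element annihilating $x\bar u$ annihilates $\bar u$ once we are local at $P$ — here one must be a little careful: it is cleaner to say $P \cdot x\bar u = 0$ and $x \bar u \neq 0$, so $P \subseteq (0:_A x\bar u)$, and then maximality of $P$ among annihilators of nonzero elements, which holds after localizing at $P$, forces equality). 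Thus $P \in \Ass_A(M_{q+s}/L_{q+s}) \subseteq \Ass_A(M/L) \subseteq \Ass_A(M)$, giving the displayed containment $\Ass_A(M_q/L_q) \subseteq \Ass_A(M_{q+s})$.

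The last sentence is then immediate: if $\Ass_A(M)$ is finite, then by the identity just proved $\Ass_A(M/L)$ is finite, so by Proposition \ref{primesofM} $\bigcup_{n} \Ass_A(M_n/L_n)$ is finite, and since $M/L$ is quasi-finite we may apply the second half of Theorem \ref{main} directly to $M/L$ to obtain $k \in \Nd$ with $\Ass_A(M_n/L_n) = \Ass_A(M_k/L_k)$ for all $n \geq k$.

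I expect the main obstacle to be the third inclusion $\Ass_A(M/L) \subseteq \Ass_A(M)$ — everything else is either a formal property of $\Ass$ over short exact sequences or a direct transcription of the argument already given for Theorem \ref{main}. That inclusion is where the specific nature of $L$ as the $R_+$-torsion submodule is essential (it is false for a general submodule), and it can be handled either homologically through $\Ass_R$ and $\Ass_A$ comparison via Proposition \ref{primesofM}, or concretely by the ``multiply into a higher component'' trick; I would present the concrete version since it also yields the ``in particular'' clause with essentially no extra work.
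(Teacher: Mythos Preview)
Your outline for the first equality and for the final sentence is correct and matches the paper: the only nontrivial inclusion is $\Ass_A(M/L)\subseteq\Ass_A(M)$, and once that is in hand the last sentence follows from Theorem~\ref{main} applied to the quasi-finite module $M/L$.

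The gap is in your argument for the ``in particular'' clause. You work in $M/L$: after localizing at $P$ and choosing $x\in R_s$ with $x\bar u\neq 0$, you obtain $P=(0:_A x\bar u)$. That yields $P\in\Ass_A(M_{q+s}/L_{q+s})$, not $P\in\Ass_A(M_{q+s})$. Your concluding chain
\[
P\in\Ass_A(M_{q+s}/L_{q+s})\subseteq\Ass_A(M/L)\subseteq\Ass_A(M)
\]
only places $P$ in $\Ass_A(M_n)$ for \emph{some} $n$, not for $n=q+s$. But landing in a graded piece of $M$ itself (rather than of $M/L$) is precisely the content of the clause, and is what is used later in Remark~\ref{multiremark} and in the proof of Theorem~\ref{genstability}. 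Note also that your choice of $s$ (``large enough that $R_s\bar u\neq 0$'') imposes no constraint at all, since $H^0_{R_+}(M/L)=0$ makes this automatic for every $s$ by Observation~\ref{observation}.

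The paper's argument chooses $s$ for a different purpose and works directly in $M$. Writing $P=(L_q:_A u)$ with $u\in M_q\setminus L_q$, one has $Pu\subseteq L_q$, so there exists $t>0$ with $R_+^t\cdot Pu=0$; set $s=(t,\ldots,t)$. Then $P\cdot R_su=0$ \emph{in $M$}, and conversely $a\cdot R_su=0$ forces $au\in L_q$, hence $a\in P$; thus $P=(0:_A R_su)$. Since $u\notin L$, Observation~\ref{observation} gives $R_su\neq 0$, so $R_su$ is a nonzero finitely generated $A$-module with annihilator the prime $P$, whence $P\in\Ass_A(R_su)\subseteq\Ass_A(M_{q+s})$. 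Uniformity of $s$ over all $P\in\Ass_A(M_q/L_q)$ comes from the finiteness of that set (equivalently, from choosing $t$ so that $R_+^t$ kills all of the finitely generated module $L_q$). Your argument is easily repaired along these lines: choose $s$ so that $R_s$ kills $Pu$ (or $L_q$), and compute the annihilator of $xu\in M_{q+s}$ rather than of $x\bar u\in (M/L)_{q+s}$.
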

\begin{proof} Because $\Ass_A(M)\subseteq \Ass_A(L)\cup \Ass_A(M/L)$, for the first statement it suffices to 
prove that $\Ass_A(M/L)$ is contained in $\Ass_A(M)$. But for this, the second statement suffices. Let $P\in \Ass_A(M_q/L_q)$. Then we may write $P = (L_q :_A u)$, for 
$u\in M_q\backslash L_q$. Thus, $P\cdot u\subseteq L_q$, so there exists $t > 0$ such that $R_{_+}^t \cdot Pu = 0$. In particular, $P\cdot R_su = 0$, for $s = (t,\ldots, t)$. Thus, $P\subseteq (0 :_A R_su)$. Suppose 
$a\in A$ and $a\cdot R_s u = 0$. Then, $R_{_+}^t\cdot au = 0$, so $au \in L_q$. Thus, $a \in P$. 
It follows that $P = (0:_AR_su)$. Since $R_su$ is a finite $A$-module, it follows that $P$ annihilates an element of $R_su$, and hence $P\in \Ass_A(M_{q+s})$. Since we may choose the 
same $t$ for all $P\in \Ass_A(M_q/L_q)$ (since $M_q/L_q$ is a finite $A$-module), it follows that 
$\Ass_A(M_q/L_q) \subseteq \Ass_A(M_{q+s})$. 
This proves our second assertion. For the final statement, if $\Ass_A (M) $ is a finite set, it follows from the first statement of this proposition that $\Ass_A (M/L) $ is a finite set. However, $M/L$ is quasi-finite, so our last assertion follows from
Theorem \ref{main}.
\end{proof}

\begin{remark}\label{multiremark} In the preceding proof, note that since $u\not \in L$, $R_+^t\cdot u \not \subseteq L$, so that if 
$P\in \Ass(M/L)$, $P$ is the annihilator of an element of $M\backslash L$. 
\end{remark}

We now set $L':= (0 \colon_{M} R_+)$. Clearly $\Ass_A (L') \subseteq \Ass_A (L)$. The following proposition supplies the reverse containment.

\begin{proposition}\label{L'equalsL} If $P \in \Ass_A (L_n)$ for some $n\in \Nd$, then $P \in \Ass_A (L'_{n+s})$ for some $s \in \Nd$. In particular, $\Ass_A(L') = \Ass_A(L)$. 
\end{proposition}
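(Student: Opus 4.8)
The plan is to reduce to the case where $A$ is local with maximal ideal $P$ and then to strip off one power of $R_+$ at a time from a witness for $P\in\Ass_A(L)$. First observe that $R_+\cdot x=0$ for every $x\in L'$, so $L'\subseteq L$ and hence $\Ass_A(L')\subseteq\Ass_A(L)$. Granting the displayed statement, the equality $\Ass_A(L')=\Ass_A(L)$ follows: if $P\in\Ass_A(L)$ then $P\in\Ass_A(L_n)$ for some $n$ by Proposition \ref{primesofM}, the displayed statement gives $P\in\Ass_A(L'_{n+s})$ for some $s\in\Nd$, and a second application of Proposition \ref{primesofM} puts $P\in\Ass_A(L')$. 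So it remains to prove the displayed statement.

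Suppose $P\in\Ass_A(L_n)$. Invoking the observation following Proposition \ref{primesofM} with $S=A\setminus P$, and using that $R$ is Noetherian so that $R_+$ is finitely generated and the formation of $H^0_{R_+}(-)$, of $(0:_{-}R_+)$, and of the graded pieces all commute with this localization, I may replace $A,R,M$ by $A_P,R_P,M_P$; then $A$ is local with maximal ideal $P$, and $P=(0:_A u)$ for a nonzero $u\in L_n=H^0_{R_+}(M)_n$. Let $t_0\geq 1$ be least with $R_+^{t_0}u=0$. Since $R$ is standard graded, $R_+^{t_0-1}$ is generated as an ideal by products of $t_0-1$ elements of $R_{(1,\dots,1)}$ (when $t_0=1$ this product is $1$), and $R_+^{t_0-1}u\neq 0$, so some such product $y$, which is homogeneous of multidegree $\mathbf{e}:=(t_0-1,\dots,t_0-1)$, satisfies $yu\neq 0$.

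Now $yu\in M_{n+\mathbf{e}}$, and $R_{(1,\dots,1)}\cdot yu\subseteq R_+^{t_0}u=0$, so $R_+\cdot yu=0$; that is, $yu\in L'_{n+\mathbf{e}}$. Moreover $Pu=0$ forces $P\subseteq(0:_A yu)$, and $(0:_A yu)$ is proper because $yu\neq 0$, so $(0:_A yu)=P$ since $P$ is the maximal ideal of $A$. Hence $P\in\Ass_A(L'_{n+\mathbf{e}})$, and undoing the localization (again by the observation after Proposition \ref{primesofM}) gives $P\in\Ass_A(L'_{n+s})$ with $s=\mathbf{e}$. The one point that needs care is exactly the reduction in the previous paragraph: one must check that passing to $A_P$ does not disturb the graded pieces or the two torsion submodules $H^0_{R_+}(M)$ and $(0:_M R_+)$, which is where Noetherianness of $R$ enters; the ``peeling'' step itself is then completely routine, and the role of the standardness hypothesis is only to let us realize the needed element of $R_+^{t_0-1}$ in the single multidegree $(t_0-1,\dots,t_0-1)$, which is what pins down the shift.
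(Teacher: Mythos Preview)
Your proof is correct and follows essentially the same approach as the paper's: localize at $P$, take $t_0$ minimal with $R_+^{t_0}u=0$, and multiply $u$ by a suitable element of multidegree $(t_0-1,\dots,t_0-1)$ to land in $L'$. You are more explicit than the paper about why such a homogeneous $y$ exists, why $(0:_A yu)=P$ after localization, and why the localization step is harmless, but the underlying argument is identical.
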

\begin{proof} Only the first statement requires proof. 
Let $P \in \Ass_A (L_n)$. By localizing at $P$ we may assume that $A$ is local
and $P  = (0 \colon u)$ where $u \in L_n$. Now, for some $t\geq 1$, $R_+^t\cdot u=0$.
If $t=1$, then $u \in L'_n$ and $P \in \Ass(L'_n)$, as required. Otherwise, we choose
$t > 1$ least so that $R_+^{t-1}\cdot u \neq 0$.
Notice $P\cdot R_+^{t-1}\cdot u = 0$. Set $s := (t-1,\ldots, t-1)$. Then there is $x \in R_s$ be such that $x\cdot u\not = 0$.
We have $P\cdot xu = 0$. Also $R_+ \cdot(x u) = 0$. Thus, $x u \in L'_{n+s}$.  It follows that $P  \in \Ass_A (L'_{n+s})$. 
\end{proof}

Here is the main result of this section. It illustrates an obstruction to the asymptotic stability of 
$\Ass_A(M_n)$ in case $M$ is not quasi-finite. 

\begin{theorem}\label{genstability} Let $R$ as above be a Noetherian standard $\Nd$-graded $A$-algebra and $M$ a multigraded $R$-module as above. Assume that $\Ass_A(M)$ is finite. Consider the following statements : 
\begin{enumerate}
\item[(i)] There exists $k\in \ZZ ^d$ such that $\Ass_A(L'_n) = \Ass_A(L'_k)$, for all $n\in \ZZ ^d$ with $n\geq k$. 
\item[(ii)] There exists $l \in \ZZ ^d$ such that $\Ass_A(L_n) = \Ass_A(L_l)$, for all $n\in \ZZ ^d$ with $n\geq l$. 
\item[(iii)] There exists $h\in \ZZ ^d$ such that $\Ass_A(M_n) = \Ass_A(M_h)$, for all $n\in \ZZ ^d$  with $n\geq h$.
\end{enumerate}
Then (i) implies (ii) and (ii) implies (iii). 
\end{theorem}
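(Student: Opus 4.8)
The plan is to prove the two implications $(i)\Rightarrow(ii)$ and $(ii)\Rightarrow(iii)$ separately, in each case combining the already-established monotonicity statements (Theorem \ref{main} for $M/L$, and the second assertions of Propositions \ref{union} and \ref{L'equalsL}) with the fact that all the relevant prime sets sit inside the finite set $\Ass_A(M)$.

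For $(i)\Rightarrow(ii)$: Since $L'\subseteq L$ we always have $\Ass_A(L'_n)\subseteq\Ass_A(L_n)$. Conversely, Proposition \ref{L'equalsL} shows that each $P\in\Ass_A(L_n)$ lies in $\Ass_A(L'_{n+s})$ for some $s\in\Nd$. So first I would fix the $k$ from $(i)$, replacing it if necessary so that $L'$ is also ``turned on'' in the sense that $\Ass_A(L'_k)$ is already the full stable value. Now take any $n\geq k$. On one hand $\Ass_A(L'_n)=\Ass_A(L'_k)$; on the other, given $P\in\Ass_A(L_n)$, Proposition \ref{L'equalsL} puts $P$ in $\Ass_A(L'_{n+s})=\Ass_A(L'_k)\subseteq\Ass_A(L_k)$ (the last inclusion because $L'\subseteq L$ and $k\geq k$; here I may need to also invoke that $\Ass_A(L'_k)\subseteq\Ass_A(L_n)$ for $n\geq k$, which follows from stability of $L'$ plus $L'\subseteq L$ — this needs a short monotonicity argument for $L$, or alternatively one runs the whole thing through $L'$ using that $R_+\cdot L'=0$ makes each $L'_n$ behave well). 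The cleanest route is: $\bigcup_{n\geq k}\Ass_A(L_n)=\bigcup_{n\geq k}\Ass_A(L'_n)$ by Proposition \ref{L'equalsL} applied in both directions, and the right side is constantly $\Ass_A(L'_k)$; combined with $\Ass_A(L'_n)\subseteq\Ass_A(L_n)$ this forces $\Ass_A(L_n)$ to be squeezed between $\Ass_A(L'_k)$ and $\bigcup_{m\geq k}\Ass_A(L_m)=\Ass_A(L'_k)$, so it equals $\Ass_A(L'_k)$ for all $n\geq k$, giving $(ii)$ with $l=k$.

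For $(ii)\Rightarrow(iii)$: Use Proposition \ref{union}, which gives $\Ass_A(M_n)=\Ass_A(L_n)\cup\Ass_A((M/L)_n)$ — wait, this equality is only asserted for $M$ itself, not degree by degree; degreewise one has $\Ass_A(M_n)\subseteq\Ass_A(L_n)\cup\Ass_A(M_n/L_n)$, with the reverse inclusion for $L_n$ clear and for $M_n/L_n$ supplied by the ``$\Ass_A(M_q/L_q)\subseteq\Ass_A(M_{q+s})$'' statement only after shifting. So I would argue asymptotically: since $\Ass_A(M)$ is finite, Proposition \ref{union} says $\Ass_A(M/L)$ is finite, so Theorem \ref{main} (applied to the quasi-finite module $M/L$) gives an $h_1$ with $\Ass_A(M_n/L_n)$ constant for $n\geq h_1$; hypothesis $(ii)$ gives $l$ with $\Ass_A(L_n)$ constant for $n\geq l$. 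Set $h=\max(h_1,l)$ componentwise (increased if needed so $M_h\neq 0$ etc.). For $n\geq h$: the containment $\Ass_A(M_n)\subseteq\Ass_A(L_n)\cup\Ass_A(M_n/L_n)$ holds always; for the reverse, $\Ass_A(L_n)\subseteq\Ass_A(M_n)$ is immediate from $L_n\subseteq M_n$, and for $\Ass_A(M_n/L_n)\subseteq\Ass_A(M_n)$ use the shift statement of Proposition \ref{union}: each such prime lies in $\Ass_A(M_{n+s})$, hence — since $n+s\geq h$ and by the now-established constancy of both $\Ass_A(L_\bullet)$ and $\Ass_A(M_\bullet/L_\bullet)$ beyond $h$ — in $\Ass_A(L_{n+s})\cup\Ass_A(M_{n+s}/L_{n+s})=\Ass_A(L_n)\cup\Ass_A(M_n/L_n)$, forcing the union $\Ass_A(L_n)\cup\Ass_A(M_n/L_n)$ to be eventually constant and trapped inside $\bigcup_{m\geq h}\Ass_A(M_m)$; combined with the two inclusions this pins $\Ass_A(M_n)$ to that constant value. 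Thus $(iii)$ holds with this $h$.

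The main obstacle is bookkeeping the gap between the degreewise decompositions and the global one in Proposition \ref{union}: the cleanest statements available are the shift inclusions $\Ass_A(L_n)\subseteq\Ass_A(L'_{n+s})$ and $\Ass_A(M_q/L_q)\subseteq\Ass_A(M_{q+s})$, and the genuinely constant behavior only kicks in after passing to a large $k$. So the real content of each implication is a ``squeeze'': an increasing-then-bounded family of subsets of a fixed finite set, where the union over all large degrees coincides with the value at the threshold, must be constant. I would state this squeeze as a one-line lemma-free observation and apply it twice; everything else is the already-proved monotonicity. I expect no serious difficulty beyond being careful that in $\ZZ^d$ ``eventually'' and ``$\geq k$'' interact correctly when taking $\max$ of finitely many thresholds and absorbing the shifts $s$.
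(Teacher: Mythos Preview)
Your argument for $(i)\Rightarrow(ii)$ is correct and is essentially the paper's: the squeeze
\[
\Ass_A(L'_k)=\Ass_A(L'_n)\subseteq\Ass_A(L_n)\subseteq\Ass_A(L'_{n+s})=\Ass_A(L'_k)
\]
for $n\geq k$ is exactly how the paper proceeds (it phrases it as two one-sided inclusions showing $\Ass_A(L_n)=\Ass_A(L_k)$, but the content is identical).

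Your argument for $(ii)\Rightarrow(iii)$, however, has a real gap. Setting $X:=\Ass_A(L_n)\cup\Ass_A(M_n/L_n)$ (constant for $n\geq h$), you correctly obtain $\Ass_A(M_n)\subseteq X$ and $X\subseteq\bigcup_{m\geq h}\Ass_A(M_m)$. But these two facts do \emph{not} ``pin $\Ass_A(M_n)$ to that constant value'': they only say that each $\Ass_A(M_n)$ is a subset of $X$ and that the union of all of them is $X$. Nothing prevents individual $\Ass_A(M_n)$ from being proper subsets of $X$, varying with $n$. The shift inclusion $\Ass_A(M_n/L_n)\subseteq\Ass_A(M_{n+s})$ from Proposition~\ref{union} gives you, for each $P\in Z:=\Ass_A(M_h/L_h)$, membership of $P$ in $\Ass_A(M_m)$ for a \emph{cofinal} set of $m$, not for \emph{all} $m\geq h$. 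There is no genuine squeeze here, because $X$ is not of the form $\Ass_A(M_{\text{something}})$.

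What is missing is precisely what the paper supplies via Remark~\ref{multiremark} and Observation~\ref{observation}: when $P\in\Ass_A(M_q/L_q)$, the proof of Proposition~\ref{union} produces $P=(0:_A xu)$ with $xu\in M_{q+s}\setminus L_{q+s}$. Because $xu\notin L$, Observation~\ref{observation} guarantees $R_{m-(q+s)}\cdot xu\neq 0$ for \emph{every} $m\geq q+s$, so $P\in\Ass_A(M_m)$ for all such $m$. This ``move forward to all larger degrees'' step is the crux; without it you cannot upgrade cofinal membership to eventual membership. The paper then chooses a single $q'$ that works for the (finite) stable set $\Ass_A(M_q/L_q)$, takes $h\geq l,q'$, and checks both inclusions $\Ass_A(M_h)\subseteq\Ass_A(M_n)$ and $\Ass_A(M_n)\subseteq\Ass_A(M_h)$ directly, in each case splitting into the $L$-part (handled by~(ii)) and the non-$L$ part (handled by the forward-move just described). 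Your outline needs exactly this ingredient to close.
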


\begin{proof}  We first note that if there exists $t\in \ZZ^d$ with $L'_n = 0$ for all $n\geq t$, then $L_n = 0$, for all $n\geq t$. Indeed, if some $L_n \not = 0$ for $n\geq t$, then there exists $P\in \Ass_A(L_n)$. But by Proposition \ref{L'equalsL}, $P\in \Ass_A(L'_{n+s})$, for some $s\in \Nd$, and therefore, $L'_{n+s} \not = 0$, a contradiction. Thus, if $\Ass_A(L'_n) = \emptyset$, for all $n\geq k$, then the same holds for $\Ass_A(L_n)$.  Now suppose $\emptyset \not = \Ass_A  L'_{n}  = \Ass_A(L'_k)$, for all 
$n\geq k$. Fix $n \in \Nd$ with $n\geq k$. Take $P\in \Ass_A(L_k)$. Then, by Proposition 
\ref{L'equalsL}, there exists $s\in \Ns$ with $P\in \Ass_A(L'_{k+s})$. Thus, $P\in \Ass_A (L'_k)$, by choice of $k$. Thus, $P\in \Ass_A(L'_n)\subseteq \Ass_A(L_n)$. Conversely, if $P\in \Ass_A(L_n)$, then 
$P\in \Ass_A(L'_{n+r})$, for some $r\in \Nd$. Thus, $P\in \Ass_A(L'_k)\subseteq \Ass_A(L_k)$. Thus, 
$\Ass_A(L_n) = \Ass_A(L_k)$, for all $n\in \Nd$ with $n\geq k$. Therefore (i) implies (ii).

Now suppose that (ii) holds. We have two cases. If $\Ass_A(L_n) = \emptyset$ for all $n\geq l$, then $L_n = 0$, for all $n\geq l$, and therefore $M$ is quasi-finite. Thus, the conclusion of (iii) follows from Theorem \ref{main}. If $\Ass_A(L_n) \not = \emptyset$ for all $n\geq l$, then $L_n \not = 0$, for all $n\geq l$, and thus, $M_n\not = 0$, for all $n\geq l$. 

To continue, first note that since $\Ass_A(M)$ is finite, Proposition \ref{union}  implies that $\Ass_A(M/L)$ is also finite. By Theorem \ref{main}, 
there exists $q\in \ZZ ^d$ such that $\Ass_A (M/L)_n = \Ass_A(M/L)_q$ for all $n\in \ZZ ^d$ with $n\geq q$, since 
$M/L$ is quasi-finite. If the stable value of $\Ass_A(M_n/L_n) = \emptyset$, then $M_n = L_n$ for all $n\geq q$, and thus $\Ass_A(M_n) = \Ass_A(M_l)$, for all $n\in \Nd$ with $n\geq h$, for any $h\in \ZZ^d$ with $h\geq q$ and $h\geq l$, which gives what we want. Otherwise, if $M_q/L_q \not = 0$, then by Proposition \ref{union}, there exists $q'\in \ZZ ^d$ with $q' \geq q$ such that 
$\Ass_A( M_q/L_q)\subseteq \Ass_A (M_{q'})$. 

Take $h\in \ZZ ^d$ such that $h\geq l$ and $h\geq q'$. Let $P \in \Ass_A(M_h)$. We will show $P\in \Ass_A(M_n)$ for all $n\geq h$, and for this, we may assume that $A$ is local at $P$. If $P\in \Ass_A(L_h)$, then by the choice of $l$, $P \in \Ass_A(L_n)$ for all $n\in \ZZ ^d$ with $n\geq h$. Thus $P\in \Ass_A(M_n)$ for all $n\geq h$. If 
$P\not \in \Ass_A(L_h)$, then $P = (0:_A u)$ for $u \in M_h\backslash L_h$. Thus, by Observation \ref{observation} above, 
$R_{n-h}\cdot u \not = 0$, for all $n\in \ZZ ^d$ with $n\geq h$. Note $n-h \geq 0$. Since 
$P$ annihilates $R_{n-h}\cdot u$ and $R_{n-h}\cdot u\subseteq M_n$, we have $P\in \Ass_A(M_n)$, for all $n\geq h$. 

Now suppose $n\in \ZZ ^d$ and $n\geq h$. Take $P\in \Ass_A(M_n)$. If $P\in \Ass_A(L_n)$, then $P\in \Ass_A(L_h)$, by our choice of $h$ and thus, $P\in \Ass_A(M_h)$. Otherwise, 
$P\in \Ass_A(M_n/L_n)$. Thus, $P\in \Ass_A(M_q/L_q)$, by the definition of $q$. Therefore, by the definition of $q'$, $P\in \Ass_A(M_{q'})$, and by Remark \ref{multiremark} above, we may assume that $P$ is the annihilator of an 
element of $M_{q'}\backslash L_{q'}$. Thus, as in the second paragraph of this proof, we may move $P$ forward so that $P\in 
\Ass_A(M_h)$, since $h\geq q'$. We now have $\Ass_A(M_h) = \Ass_A(M_n)$, for all $n\geq h$, which 
gives (iii). 
\end{proof}

\begin{remark} (a) Clearly, in light of Proposition \ref{primesofM}, even in the singly graded case we cannot expect stability of
$\Ass_A (M_n)$ without a finiteness condition on $\Ass_A (M)$. Moreover, in the presence of
this condition, we cannot do better than Theorem \ref{genstability}. Indeed, notice that $L'$ is 
just a direct sum of $A$-modules. Thus, for example, let $\gamma$ be
an irrational number with a binary expansion $\gamma := a_1a_2a_3\cdots$ and take two distinct primes
$P _1$ and $P _2$ in $A$. If we set $R = A$ and $M := \bigoplus_{n\geq 0} M_n$, where
$M_n := A/P _1$, for $a_n=0$ and $M_n := A/P _2$, for $a_n=1$, then $L' = M$ and
$\Ass_A (M)$ is finite, but $\Ass_A (M_n)$ is neither stable nor periodic. 

\medskip
\noindent
(b) Neither of the implications in Theorem \ref{genstability} can be reversed. To see this, let 
$R = k[X]$ denote the polynomial ring in one variable over the field $k$. Let $T$ be the graded $R$-module $R/X^2R$. Thus,  $T = T_0\oplus T_1$, with $T_0$ and $T_1$ one-dimensional vector spaces over $k$. Moreover, $X^2\cdot T _0 = 0$, $X\cdot T_0 \not = 0$ and $X\cdot T_1 = 0$. Let 
$M = \bigoplus_{n\geq 0} M_n$, where $M_n = T_0$ if $n$ is even and $M_n = T_1$ if $n$ is odd. 
Then $M = L := H^0_{R_+}(M)$ and $\Ass_k(M_n) = (0)$, for all $n$, so $\Ass_A(L_n)$ is stable for 
all $n$. On the other hand, setting $L' := (0:_R R_+)$, it follows that $L'_n = 0$ for $n$ even and $L'_n \not = 0$, for $n$ odd. Thus, $\Ass_k(L'_n)$ is not stable for $n$ large. Therefore, statement (ii) in Theorem \ref{genstability} does not imply statement (i). A similar example can be constructed to show that (iii) does not imply (ii) in the statement of Theorem \ref{genstability}. 

\noindent
\end{remark}

\section{Second Applications}

 Let $C$ be a finitely generated $A$-module, $J_1, \ldots, J_d$ be a family of ideals and for 
each $1\leq i\leq d$ let $\{I_{i,n_i}\}_{n_i\geq 0}$ be a filtration of ideals satisfying : (i) $I_{i,0} = A$, 
(ii) $J_i^{n_i}\subseteq I_{i,n_i}$, for all $n_i\geq 0$, (iii) $I_{i,n_i}\subseteq I_{i,m_i}$, whenever $n_i\geq m_i$,  and (iv) $J_{r_i}\cdot I_{i,s_i} \subseteq I_{i,r_i+s_i}$, for all $r_i$ and $s_i$. An application of the main result of this section shows that if $C$ is a finitely generated $A$-module, there exists $k = (k_1, \ldots, k_d) \in \Nd$ such that for all $n = (n_1, \ldots, n_d)\geq k$, \[\Ass_A(I_{1,n_1}\cdots I_{d,n_d}C/J_1^{n_1} \cdots 
J_d^{n_d}C) = \Ass_A(I_{1,k_1}\cdots I_{d,k_d}C/J_1^{k_1} \cdots 
J_d^{k_d}C).\]

Not suprisingly, we accomplish our goal by recasting the given data in terms of multigraded rings and modules. Throughout the rest of this section, we fix ideals $J_1, \ldots, J_d \subseteq A$. Keeping the notational conventions from the previous section, we will write $J^n$ for 
$J_1^{n_1}\cdots J_d^{n_d}$ for all $n = (n_1, \ldots, n_d) \in \Nd$. 

We will need the following definition.

\begin{definition}\label{multifil} For $J_1, \ldots, J_d$ as above, we call a collection of ideals 
$\{I_n\}_{n\in \Nd}$ a \textit{multi-filtration} with respect to $J_1, \ldots, J_d$ if the following conditions holds :
\begin{enumerate}
\item[(i)] $I_{(0,\ldots,0)} = A$.
\item[(ii)] $J^n\subseteq I_n$, for all $n\in \Nd$.
\item[(iii)] For all $n\leq m\in \Nd$, $I_m\subseteq I_n$.
\item[(iv)] For all $n, h\in \Nd$, $J^n\cdot I_h \subseteq I_{n+h}$.
\end{enumerate}
\end{definition}

Given a multi-filtration with respect to $J_1, \ldots, J_d$, it is now a simple matter to create a multigraded set-up to which we can apply the results of the previous section.

\s\textbf{Notation.} Let $\{I_n\}_{n\in \Nd}$ be a multi-filtration with respect to $\J$. Let $C$ be a finitely generated $A$-module. We set $\R := \bigoplus _{n\in \Nd} J^n$, the multigraded Rees ring of $A$ with respect to 
$\J$. We also set $U :=\bigoplus_{n \in \Nd} I_nC$, $V := \bigoplus_{n\in \Nd} J^nC$ and $M := U/V$. Note that 
$\R$ is a standard $\Nd$-graded Noetherian ring, and $U, V$ and $M$ are multigraded $\R$-modules 
satisfying the standard hypotheses from the previous section. 

Our goal now is to show that, with the notation just established, there exists $k\in \Nd$ such that for all $n\in \Nd$ with 
$n\geq k$, $\Ass_A (M_n) = \Ass_A (M_k)$. We first note that $\Ass_A (M)$ is finite, since on the one hand, 
$M\subseteq \bigoplus_{n\in \Nd} C/J^nC$ while on the other hand, it is well known that 
$\bigcup_{n\in \Nd} \Ass_A (C/J^nC)$ is finite (in fact, ultimately stable), and therefore, 
$\Ass _A   (\bigoplus_{n\in \Nd}  C/J^nC)$ is finite. 

We need the following lemma which follows from \cite{KS}, Lemma 1.3 (see also \cite{KMR}, Proposition 1.4).

\begin{lemma}\label{cancellation} Let $\J \subseteq A$ be as above and suppose that $C$ is a finitely generated $A$-module. Suppose that each $J_i$ contains a nonzero divisor on $C$. Then 
there exists $k\in \Nd$ such that for all $n\in \Nd$ with $n \geq k$, $(J^{n+r}C :_C J^r) = J^nC$, 
for all $r\in \Nd$.
\end{lemma}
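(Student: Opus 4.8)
The plan is to decouple the two asymptotics involved: the uniformity in $r$ is obtained by an elementary reduction, after which the growth in $n$ is controlled by a finiteness argument over the multigraded Rees ring $\R := \bigoplus_{n\in\Nd} J^n$. Throughout, writing $e_i$ for the $i$-th standard basis vector of $\Nd$, the containment $J^nC \subseteq (J^{n+r}C :_C J^r)$ is automatic, so only the reverse inclusion is at issue.

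\textbf{Step 1: reduction to $r=e_i$.} I would first show, using only the identity $(N :_C \mathfrak{a}\mathfrak{b}) = ((N :_C \mathfrak{a}) :_C \mathfrak{b})$ for ideals $\mathfrak a,\mathfrak b\subseteq A$ and a submodule $N\subseteq C$, that it suffices to find a \emph{single} $k\in\Nd$ for which $(J^{m+e_i}C :_C J_i) = J^mC$ holds for every $1\le i\le d$ and every $m\ge k$. Granting this, one argues by induction on $|r| := r_1+\cdots+r_d$: given $n\ge k$ and $|r|\ge 1$, choose $i$ with $r_i\ge 1$, write $J^r = J^{r-e_i}\cdot J_i$, and peel off $J_i$ to obtain $(J^{n+r}C :_C J^r) = \big((J^{(n+e_i)+(r-e_i)}C :_C J^{r-e_i}) :_C J_i\big)$. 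Since $n+e_i\ge k$ and $|r-e_i|<|r|$, the inductive hypothesis collapses the inner colon to $J^{n+e_i}C$, and the unit-step hypothesis applied at $m=n$ then gives $J^nC$. So the problem is reduced to producing, for each fixed $i$, an index $k^{(i)}\in\Nd$ with $(J^{m+e_i}C :_C J_i) = J^mC$ for all $m\ge k^{(i)}$; one then takes $k$ to be the componentwise maximum of $k^{(1)},\dots,k^{(d)}$.

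\textbf{Step 2: a finitely generated $\R_+$-torsion module.} Fix $i$ and a nonzerodivisor $x_i\in J_i$ on $C$, and let $V := \bigoplus_{n} J^nC$ denote the multigraded Rees module, which is finitely generated over $\R$. Consider the $\Nd$-graded $\R$-module
\[
P := \bigoplus_{n\in\Nd} \big(J^{n+e_i}C :_C J_i\big)/J^nC,
\]
which one checks to be a multigraded $\R$-module in the sense of this paper. I would then verify two things. First, $P$ is finitely generated over $\R$: multiplication by $x_i$ carries $(J^{n+e_i}C :_C J_i)$ injectively into $J^{n+e_i}C = V_{n+e_i}$ and is $\R$-linear, so it realizes $\bigoplus_n (J^{n+e_i}C :_C J_i)$ as an $\R$-submodule of a degree shift of $V$; as $\R$ is Noetherian, this submodule, and hence its quotient $P$, is finitely generated. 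Second, $P$ is $\R_+$-torsion: if $[y]\in P_n$ is a homogeneous class then $J_i y\subseteq J^{n+e_i}C$, i.e.\ $\R_{e_i}\cdot[y]=0$, so $[y]\in H^0_{\R_+}(P)$ by Observation \ref{observation} (as $e_i\neq 0$).

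\textbf{Step 3: conclusion, and the main obstacle.} A finitely generated $\R_+$-torsion multigraded $\R$-module vanishes in all large degrees: choosing homogeneous generators $p_1,\dots,p_s$ in degrees $d^1,\dots,d^s$, there is a common $t$ with $\R_+^{t}p_j=0$ for all $j$, and since $\R$ is standard $\Nd$-graded this forces $\R_c p_j=0$ for every $c\ge(t,\dots,t)$, whence $P_n=0$ once $n\ge d^j+(t,\dots,t)$ for all $j$. This produces the $k^{(i)}$ demanded in Step 1 and completes the argument. The step I expect to require the most care is Step 1: although formally elementary, it is the only place where uniformity in $r$ is obtained. Running the Rees-ring argument of Step 2 directly for a general fixed $r$ would still work, but the degree past which the resulting module vanishes would depend on $r$ uncontrollably; it is precisely by routing through the finitely many vectors $e_1,\dots,e_d$ that one extracts a single $k$ working for all $r$ simultaneously.
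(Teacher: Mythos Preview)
Your argument is correct and complete. The paper itself does not prove this lemma: it merely records that the statement follows from \cite[Lemma~1.3]{KS} (see also \cite[Proposition~1.4]{KMR}), so there is no in-text argument to compare against. What you have supplied is a clean self-contained proof: the reduction in Step~1 to the unit colons $(J^{m+e_i}C:_C J_i)$ is precisely what secures uniformity in $r$, and the embedding of $\bigoplus_n(J^{n+e_i}C:_C J_i)$ into a degree shift of the Rees module $V$ via multiplication by the nonzerodivisor $x_i$ in Step~2 is the standard device for exhibiting this colon module as finitely generated over $\R$, after which Step~3 is routine.
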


We are now ready for the principal result of this section, which immediately yields the main result of this paper.

\begin{theorem}\label{multimainthm} Let $A$ be a Noetherian ring, $C$ a finitely generated $A$-module and $\J\subseteq A$ finitely many ideals. Suppose that $\{I_n\}_{n\in \Nd}$ is a multi-filtration with respect to $\J$. Then there exists $k\in \Nd$ such that for all $n\in \Nd$ with $n\geq k$, $\Ass_A (I_nC/J^nC) = \Ass_A(I_kC/J_kC)$. 
\end{theorem}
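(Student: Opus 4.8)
The plan is to reduce, by way of Theorem~\ref{genstability}, to a statement about a single torsion submodule of $C$, and then to exploit a monotonicity that is invisible at the level of the modules $M_n$ themselves. First note that $\Ass_A(M)$ is finite: $M$ embeds into $\bigoplus_{n\in\Nd} C/J^nC$, and since $\bigcup_{n\in\Nd}\Ass_A(C/J^nC)$ is finite, Proposition~\ref{primesofM} shows that $\Ass_A\big(\bigoplus_{n\in\Nd} C/J^nC\big)$, hence $\Ass_A(M)$, is finite. Therefore Theorem~\ref{genstability} applies, and it is enough to verify its condition (ii): that $\Ass_A(L_n)$ is eventually constant, where $L:=H^0_{\R_+}(M)$.

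To identify $L_n$, I would apply $H^\bullet_{\R_+}(-)$ to the exact sequence $0\to V\to U\to M\to 0$, where $V=\bigoplus_{n}J^nC$ is the multi-Rees module of $C$. Since $V$ is generated in degree zero by the finitely generated $A$-module $C$, it is a finitely generated module over the standard $\Nd$-graded Noetherian ring $\R$, so $H^1_{\R_+}(V)_n=0$ for all $n$ sufficiently large; hence $L_n\cong H^0_{\R_+}(U)_n/H^0_{\R_+}(V)_n$ for such $n$. Because $\R$ is standard graded, an element of degree $n$ lies in $H^0_{\R_+}$ of a module precisely when it is annihilated by $\R_{(t,\dots,t)}=J_1^t\cdots J_d^t=D^t$ for some $t$, where $D:=J_1\cdots J_d$; this uses Observation~\ref{observation}. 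Consequently $H^0_{\R_+}(U)_n=I_nC\cap W$ and $H^0_{\R_+}(V)_n=J^nC\cap W$, where $W:=\bigcup_{t\ge 1}(0:_CD^t)$ is the $D$-torsion submodule of $C$. Now $W$ is finitely generated and is killed by $D^s=J_1^s\cdots J_d^s$ for some $s\ge 1$, and $\bigoplus_n(J^nC\cap W)$, being a submodule of the finitely generated $\R$-module $V$, is generated in bounded degree; so its degree-$n$ component is contained in $\sum_g J^{\,n-g}W$ with $g$ ranging over a fixed finite set, and this vanishes as soon as every coordinate of $n-g$ is at least $s$, since then $J^{\,n-g}\subseteq D^s$. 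Hence $L_n\cong I_nC\cap W$ for all $n$ sufficiently large.

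The decisive point is that this last family is monotone in the right direction: if $n\ge m$ then $I_n\subseteq I_m$, so $I_nC\cap W\subseteq I_mC\cap W$, and therefore $\Ass_A(I_nC\cap W)\subseteq\Ass_A(I_mC\cap W)$. Thus $n\mapsto\Ass_A(L_n)$ is an \emph{order-reversing} map from $\Nd$ into the finite set of subsets of $\Ass_A(W)$, and any such map is eventually constant: for each $P\in\Ass_A(W)$ the set $\{n:P\in\Ass_A(L_n)\}$ is a down-set, so its complement is an up-set, which by Dickson's lemma is generated by finitely many elements; taking $k\in\Nd$ above all these generators (over the finitely many relevant $P$) makes $\Ass_A(L_n)$ constant for $n\ge k$. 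This is condition (ii) of Theorem~\ref{genstability}, and the implication (ii)$\Rightarrow$(iii) there gives the desired stability of $\Ass_A(I_nC/J^nC)$.

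I expect the main obstacle to be the computation of $L_n$ in the second step. The subtlety is that $U=\bigoplus_n I_nC$ need not be finitely generated over $\R$, so no vanishing of its higher local cohomology is at hand; one must instead extract $L_n$ through the finitely generated submodule $V$, and then supply the Artin--Rees-type vanishing $J^nC\cap W=0$ for $n\gg 0$. Once $L_n$ is identified with $I_nC\cap W$, the order-reversing observation is exactly what converts the finiteness of $\Ass_A(M)$ into genuine stability, circumventing the fact that the associated primes of a descending chain of submodules need not stabilize in general.
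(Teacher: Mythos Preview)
Your proof is correct and follows the same overall architecture as the paper: reduce via Theorem~\ref{genstability} to the stability of $\Ass_A(L_n)$, identify $L_n$ with $I_nC\cap W$ for $n$ large (where $W=H^0_{J_1\cdots J_d}(C)$, the paper's $T$), and then exploit the descending-chain monotonicity coming from $I_n\subseteq I_m$ for $n\ge m$ to conclude.

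The only genuine difference is in how $L_n$ is computed. The paper works directly inside $M$: it writes $L_n=[(J^{n+r}C:_CJ^r)\cap I_nC]/J^nC$ and then invokes Lemma~\ref{cancellation} (from \cite{KS}) to show $(J^{n+r}C:_CJ^r)=J^nC+T$ for $n$ large, after which the modular law and Artin--Rees give $L_n\cong I_nC\cap T$. You instead apply $H^\bullet_{\R_+}$ to $0\to V\to U\to M\to 0$ and use that $H^1_{\R_+}(V)_n=0$ for large $n$ (since $V$ is finitely generated over $\R$) to obtain $L_n\cong(I_nC\cap W)/(J^nC\cap W)$, then kill the denominator by the same Artin--Rees argument. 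Your route replaces the external Lemma~\ref{cancellation} with the standard high-degree vanishing of local cohomology of finitely generated modules, a fact the paper itself invokes in Proposition~\ref{quasigradepositive}; this makes the argument a bit more self-contained, at the cost of being slightly less explicit about what is actually happening at the level of elements. Either way, once $L_n\cong I_nC\cap W$ is established, the endgame via monotonicity and Dickson's lemma is identical to the paper's.
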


\begin{proof} Set $\R := \bigoplus_{n\in Nd} J^n$ and $M := \bigoplus_{n\in \Nd} I_nC/J^nC$, so that 
$M$ is a not necessarily finitely generated multigraded $\R$-module. Following the notation of the previous section, we set $L := H^0_{R_+}(M)$.  Since $\Ass_A(M)$ is finite, by Theorem \ref{genstability} it suffices to show that there exists $l\in \Nd$ such that $\Ass_A(L_n) = \Ass_A(L_l)$, for all $n \in \Nd$ with $n\geq l$.
 
We now calculate an expression for $L_n$, for $n\in \Nd$. Suppose $u\in L_n$. Then there exists 
$t\in \NN$ such that $R_{_+}^t\cdot u = 0$ in $L$, for all $u\in L_n$ (since $L_n$ is a finitely generated 
$A$-module). It follows that if we set $r = (t,\ldots, t) \in \Nd$, then $J^r\cdot u\subseteq J^{n+r}C$ in $C$. Thus, $L_n = (J^{n+r}C :_C J^r)\cap I_n/J^n$. Note, $r$ depends upon $n$. 
Now, set $T := H^0_J(C)$, where $J = J_1\cdots J_d$. Thus, each $J_j$ contains a non-zerodivisor on $C/T$. It follows from Lemma \ref{cancellation} that there exists $k\in \Nd$ such that for all 
$n\in \Nd$ with $n\geq k$ and all $r\in \Nd$, $(J^{n+r}C :_C J^r) \subseteq  J^nC +T$. 

Now, suppose $n\in \Nd$ and $n\geq k$. We may choose $r$ as in the previous paragraph so that  \[T = (0 :_C J^r)\quad \textrm{and} \quad L_n = (J^{n+r}C :_C J^r)\cap I_n/J^n.\] In particular, $(J^{n+r}C :_C J^r) =  J^nC +T$. Increasing $k$ if necessary, we may further 
assume that $T\cap J^nC = 0$, by using the multigraded form of the Artin-Rees Lemma. Under these conditions 
\[L_n = (J^nC + T)\cap I_n/J^nC = (J^nC + I_nC\cap T)/J^nC = I_nC\cap T.\]
The path to the end of the proof is now clear. If we let $P_1, \ldots, P_g$ denote 
\[\bigcup_{n\geq k}\Ass_A(L_n) = \bigcup_{n\geq k} \Ass_A(I_nC\cap T),\] then each $P_i$ is an associated prime of some $L_{h^i}$
with $h^i\in \Nd$, $h_i\geq k$, $1\leq i\leq r$. We now just use the fact that for all $m,n\in \Nd$ with $m\geq n\geq k$, 
$L_m\subseteq L_n$. Now, if there exists $c\in \Nd$ such that $L_n = 0$ for all $n\geq c$, then 
we take $l = c$ and note that $\Ass_A(L_n) = \emptyset$, for all $n\geq c$. Otherwise, we order the set of primes 
$P_1, \ldots, P_g$ so that for each $P_1, \ldots, P_s$, if $P_i$ is among these primes, there exists 
$a^i\in \Nd$ with $P\not \in \Ass_A(L_{a^i})$ and no such $a^i$ exists if $s+1\leq i\leq g$. It follows that if $l\in \Nd$ and $l\geq a^i$ for $1\leq i\leq s$, then for all 
$n\geq l$, $P_i \not \in \Ass_A(L_n)$, if $1\leq i\leq s$ and $P_j\in \Ass_A(L_n)$, for $s+1\leq j\leq g$. 
In particular, $\Ass_A(L_n) = \Ass_A(L_l)$, for all $n\geq l$, and the proof of the theorem is complete.
\end{proof}

We now provide the result stated at the beginning of this section. 

\begin{corollary} Let $A$ be a noetherian ring, $C$ a finitely generated $R$-module and $\J \subseteq A$ finitely many ideals. Suppose that for 
each $1\leq i\leq d$, $\{I_{i,n_i}\}_{n_i\geq 0}$ is a filtration of ideals satisfying : 

\begin{enumerate}
\item[(i)] $I_{i,0} = A$. 
\item[(ii)] $J_i^{n_i}\subseteq I_{i,n_i}$, for all $n_i\geq 0$. 
\item[(iii)] $I_{i,n_i}\subseteq I_{i,m_i}$, for all $m_i\leq n_i\in \NN$.
\item[ (iv)] $J_{r_i}\cdot I_{i,s_i} \subseteq I_{i,r_i+s_i}$, for all $r_i$ and $s_i$. 
\end{enumerate}
Then there exists $k = (k_1, \ldots, k_d) \in \Nd$ such that for all $n = (n_1, \ldots, n_d)\geq k$, \[\Ass_A(I_{1,n_1}\cdots I_{d,n_d}C/J_1^{n_1} \cdots 
J_d^{n_d}C) = \Ass_A(I_{1,k_1}\cdots I_{d,k_d}C/J_1^{k_1} \cdots 
J_d^{k_d}C).\]
\end{corollary}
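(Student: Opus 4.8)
The plan is to reduce the "$d$ separate single-index filtrations" statement to Theorem~\ref{multimainthm}, which governs a single multi-filtration indexed by $\Nd$. The natural candidate is to set, for $n=(n_1,\ldots,n_d)\in\Nd$,
\[
I_n := I_{1,n_1}\cdot I_{2,n_2}\cdots I_{d,n_d}.
\]
The entire burden of the argument is then to verify that $\{I_n\}_{n\in\Nd}$ is a multi-filtration with respect to $\J$ in the sense of Definition~\ref{multifil}, after which the corollary is literally the conclusion of Theorem~\ref{multimainthm}, since $J^n = J_1^{n_1}\cdots J_d^{n_d}$ by our standing convention and $I_nC/J^nC$ is exactly the module appearing there.

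So the key steps, in order, are the four checks of Definition~\ref{multifil}. Condition (i), $I_{(0,\ldots,0)}=A$, is immediate from $I_{i,0}=A$ for each $i$ (hypothesis (i) of the corollary) and the fact that a product of copies of $A$ is $A$. Condition (ii), $J^n\subseteq I_n$: since $J_i^{n_i}\subseteq I_{i,n_i}$ for each $i$ by hypothesis (ii), multiplying these containments gives $J_1^{n_1}\cdots J_d^{n_d}\subseteq I_{1,n_1}\cdots I_{d,n_d}$, i.e. $J^n\subseteq I_n$. Condition (iii), monotonicity: if $n\leq m$ in $\Nd$ then $n_i\leq m_i$ for each $i$, so $I_{i,m_i}\subseteq I_{i,n_i}$ by hypothesis (iii); taking the product over $i$ of these $d$ containments yields $I_m\subseteq I_n$. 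Condition (iv), $J^n\cdot I_h\subseteq I_{n+h}$: here I would expand $J^n\cdot I_h = (J_1^{n_1}\cdots J_d^{n_d})\cdot(I_{1,h_1}\cdots I_{d,h_d})$, regroup factor by factor as $\prod_i (J_i^{n_i}\cdot I_{i,h_i})$ (using commutativity of ideal multiplication), and apply hypothesis (iv) of the corollary $d$ times — one needs $J_i^{n_i}\cdot I_{i,h_i}\subseteq I_{i,n_i+h_i}$, which follows by iterating the single-step relation $J_{r_i}\cdot I_{i,s_i}\subseteq I_{i,r_i+s_i}$ (or noting it in the slightly stronger form $J_i^{n_i}\cdot I_{i,h_i}\subseteq I_{i,n_i+h_i}$ that this iteration produces). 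The product of these gives $J^n\cdot I_h\subseteq \prod_i I_{i,n_i+h_i} = I_{n+h}$.

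Having verified all four conditions, I would conclude by invoking Theorem~\ref{multimainthm} with this $\{I_n\}$: there exists $k\in\Nd$ such that $\Ass_A(I_nC/J^nC)=\Ass_A(I_kC/J^kC)$ for all $n\geq k$, and unwinding the definition of $I_n$ gives precisely the displayed equality in the corollary.

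The only mild subtlety — and the step I would be most careful about — is condition (iv): one must be sure that the single-index hypothesis $J_{r_i}\cdot I_{i,s_i}\subseteq I_{i,r_i+s_i}$ genuinely upgrades to $J_i^{n_i}\cdot I_{i,h_i}\subseteq I_{i,n_i+h_i}$. This is a routine induction on $n_i$ (base case $n_i=0$ is trivial; the inductive step writes $J_i^{n_i+1}\cdot I_{i,h_i}=J_i\cdot(J_i^{n_i}\cdot I_{i,h_i})\subseteq J_i\cdot I_{i,n_i+h_i}\subseteq I_{i,n_i+h_i+1}$, the last inclusion being hypothesis (iv) with $r_i=1$, $s_i=n_i+h_i$). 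After that, everything is formal ideal-theoretic bookkeeping, and no new ideas beyond Theorem~\ref{multimainthm} are needed.
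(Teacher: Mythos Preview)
Your proposal is correct and is exactly the paper's approach: the paper also sets $I_n := I_{1,n_1}\cdots I_{d,n_d}$ and simply asserts that $\{I_n\}_{n\in\Nd}$ is a multi-filtration with respect to $\J$, then invokes Theorem~\ref{multimainthm}. Your verification of conditions (i)--(iv) just fills in the details that the paper leaves to the reader.
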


\begin{proof} For $n = (n_1, \ldots, n_d)$ in $\Nd$, set $I_n := I_{1,n_1}\cdots I_{d,n_d}$. The result follows immediately from Theorem \ref{multimainthm} since the 
collection of ideals $\{I_n\}_{n\in \Nd}$ is a multi-graded filtration with respect to $\J$.
\end{proof}

\begin{remark}\label{distinction} It is important to note that in the preceding theorem and corollary, we do not need to make any assumption that yields quasi-finiteness. This should be contrasted with Corollary \ref{stablegraded} and Remark \ref{hpvexample}. Indeed, if we take $d = 1$ in the preceding corollary, and set $R := \bigoplus_{n\geq 0} J^n$ and $S :=\bigoplus_{n\geq 0}I_n$, then we have $\Ass_A(S_n/R_n)$ is stable for large $n$, even if $\grade(R_+,S) = 0$.  
\end{remark}

We now give two examples illustrating certain aspects of the theorem. 

\begin{example}\label{symbolic-powers} Let $\J \subseteq A$ be finitely many ideals, let $K\subseteq A$ be any ideal and set
$I_n := J^n \colon K^\infty$, for all $n\in \Nd$. Then $\{J^n\colon K^{\infty}\}$ is a multi-filtration with respect to $\J$, so by Theorem \ref{multimainthm}, there exists $k\in \Nd$ such that $\Ass_A (J^n\colon K^{\infty})/J^n$ is stable for all $n \geq k$. Note, that in general, $\bigoplus_{n\in \Nd} (J^n\colon K^{\infty})$ need not 
be a Noetherian $A$-algebra. Note also that, as the proof of Theorem \ref{multimainthm} shows, the stability of $\Ass(M_n) = \Ass (J^n\colon K^{\infty})/J^n$ depends upon the stability of $\Ass(L_n)$, for $L := H^0_{R_+}(M)$. We determine this latter set of primes. It follows from the proof of Theorem \ref{multimainthm} 
that there exists $c \in \Nd$ such that $L_n = (J^n \colon K^\infty)\cap H^0_J(A)$, for all $n\in \Nd$ with $n\geq c$. (Recall $J = J_1\cdots J_d$.) We now note that 
\[
(J^n \colon K^\infty)\cap H^0_J(A) = H^0_K(A)\cap H^0_J(A), 
\]
for $n$ sufficiently large (in $\Nd$). To see this, clearly $H^0_K(A)\cap H^0_J(A)   \subseteq  (J^n \colon K^\infty)\cap H^0_J(A)$.
For the reverse inclusion, let $u \in (J^n \colon K^\infty)\cap H^0_J(A)$. Then $uK^r \subseteq J^n$ for some $r \geq 1$.
Notice $uK^r \subseteq H^0_J(A)$. So $uK^r \subseteq H^0_J(A)\cap J^n = 0$ for $n$ large in $\Nd$, by the multigraded version of the Artin-Rees lemma. Thus $u \in H^0_K(A)$, as required. It follows 
that in the present case, $\Ass (H^0_K(A)\cap H^0_J(A))$ is the stable value of $\Ass(L_n)$. Finally, suppose $H^0_J(A) \cap H^0_K(A) \not = 0$ and $P\in \Ass (H^0_J(A)\cap H^0_K(A))$. Then 
$P\in \Ass(A)$ and $P$ must contain both $J$ and $K$, else $H^0_K(A)_P\cap H^0_J(A)_P = 0$. Conversely, suppose $P\in \Ass(A)$ and $P$ contains $J$ and $K$. Then $P = (0\colon a)$ and thus $J\cdot a = 0$, so $a \in H^0_J(A)$. Similarly, $a\in H^0_K(A)$. It follows that $\Ass_A(H^0_J(A)\cap H^0_K(A)) = \Ass(A)\cap V(J+K)$.  Therefore, for all $n$ sufficiently large in $\Nd$, $\Ass(L_n) = \Ass(A)\cap V(J + K)$. 
\end{example}

\begin{example} Let $J\subseteq A$ be an ideal and take $I\subseteq A$ any ideal containing $J$ and set
$I_n := \ov{I^n}$. Then, by Theorem \ref{multimainthm}, $\Ass_A (I_n/J^n)$ is stable for all $n \gg 0$. 
Let $M = \bigoplus_{n\geq 1}\ov{I^n}/J^n$. Then by the proof of Theorem \ref{multimainthm}, $L_n = H^0_J(A) \cap \ov{I^n}$, for all $n\gg 0$, for $L = H^0_{R_+}(M)$. As in the previous example, we calculate the stable value 
of $\Ass_A(L_n)$. 
We now make two claims.

\medskip
\noindent
Claim 1 : $\Ass_A (H^0_J(A) \cap \ov{I^n})   =  \{ P \in \Ass_A \ov{I^n} \mid P \supseteq J \}$.

To see this, note that
if $P \nsupseteq J$ then $(H^0_J(A))_P = 0$, so $P \notin \Ass_A (H^0_J(A) \cap \ov{I^n})$. Thus,  $\Ass_A (H^0_J(A)\cap \ov{I^n})$ is contained in $\{ P \in \Ass_A \ov{I^n} \mid P \supseteq J \}$. On the other hand,
if $P \supseteq J$ and $P\in \Ass_A (\ov{I^n})$, then we localize at $P$ and assume $A$ is local at $P$ and $J \not = A$. Say $P = (0 \colon x)$ for $x \in \ov{I^n}$.
Then $P\cdot x = 0$.
So $x \in (0\colon P) \subseteq (0 \colon J^\infty)$. Thus $x \in  H^0_J(A) \cap \ov{I^n}$. Therefore $P \in \Ass_A (H^0_J(A) \cap \ov{I^n})$. This proves the
first claim.

Now, set $Z = $ nil-radical of $A$. Clearly we have the following exact sequence
 \begin{equation}
 0 \rt Z \rt \ov{I^n} \rt \ov{I^n}/Z \rt 0.
 \end{equation}
We make a second claim,

\medskip
\noindent
Claim 2 : For $n$ sufficiently large, $\Ass_A (\ov{I^n}) =  (\Ass_A (Z))\cup \{ P \mid P \in \Min(A) \ \textrm{and} \ I\nsubseteq P\}$. (Note, here we regard $Z$ as an $A$-module.)

To see, we first note that $\Ass_A (\ov{I^n}/Z) = \{ P \mid P \in \Min(A) \ \textrm{and} \ I\nsubseteq P\}$, for $n$ large. Clearly any minimal
prime $P$ not containing $I$ belongs to $\Ass_A (\ov{I^n}/Z)$ (just localize at $P$). On the other hand, if $P \in \Ass_A (\ov{I^n}/Z)$,
then $P \in \Ass_A (A/Z)$, so $P$ is a minimal prime. But if $P$ contains $I$, then $I_{P}$ is nilpotent. Thus,
$(\ov{I^n})_{P} = Z_{\fp}$ for $n$ large, so $\ov{I^n}_{P}/Z_{P} = 0$, Thus, $P \not \in \Ass_A (\ov{I^n}/Z)$, contradiction. Therefore, for $n$ sufficiently large,
\[\Ass_A (\ov{I^n}/Z) = \{ P \mid P \in \Min(A) \ \textrm{and} \ I\nsubseteq P\}.\] It follows immediately from this and (5.8.1) that
\[\Ass_A (\ov{I^n}) \subseteq (\Ass_A (Z))\cup \{ P \mid P \in \Min(A) \ \textrm{and}\ I\nsubseteq P\},\] for all large $n$. For the reverse containment,
clearly $\Ass_A (Z) \subseteq \Ass_A (\ov{I^n})$. Moreover, if $P \in \Min (A)$ and $P \nsupseteq I $ then by localizing at $P$ one sees that $P \in \Ass_A (\ov{I^n})$.
Thus, Claim 2 holds.

Now, as before, setting $L := H^0_{R_+}(M)$, we have $L_n = H^0_J(A)\cap \ov{I^n}$, for all large $n$ and by Claims 1 and 2,
\[ [(\Ass_A (Z))\cup \{ P \mid P \in \Min(A) \ \textrm{and} \ I\nsubseteq P\}]\cap V(J),\]
is the stable value of $\Ass_A(L_n)$, for $n$ sufficiently large.
\end{example}

\begin{remark}  Regarding Claim 2, we have an analogue for powers of an ideal. We note that for $n$ sufficiently large, 
 \[
 \Ass_A (I^n) = \{ P \mid P \in \Ass A \ \textrm{and} \ P \nsupseteq I \}.
 \]
Indeed, if $P \in \Ass_A (A)$ and $P \nsupseteq I$ then $I^n_P = R_P$. So $P \in \Ass_A (I^n)$ for all $ n\geq 1$.
On the other hand suppose $P \in  \Ass_A (I^n)$ and $P \supseteq I$, say $P = (0 \colon c_n)$ where $c_n \in I^n$.
So for $n$ sufficiently large we have \[c_n \in (0\colon P)\cap I^n = I^{n-n_0}(I^{n_0}\cap (0\colon P )) = 0,\] a contradiction. Thus, for sufficiently large $n$, if $P\in \Ass_A(I^n)$, $P\in \Ass_A(A)$, and $I\not \subseteq P$.
\end{remark}

Finally, we note the following, which should be clear from our results above. Let $C$ be a finitely generated $A$-module and $J_1, \ldots, J_d$ finitely many ideals. Assuming that $\bigcup_{n\in \Nd} C/J^nC$ is finite, asymptotic stability of $\Ass_A(C/J^nC)$, $n\in \Nd$ follows from Theorem \ref{multimainthm} by 
taking the multi-filtration with respect to $\J$ to be 
$I_n = A$ for all $n\in \Nd$. The finiteness of $\bigcup_{n\in \Nd} C/J^nC$  follows fairly readily from 
basic properties of prime divisors of regular elements on a module by using extended Rees algebras. For example, see \cite{KS}.

\end{document}